\title{\bf Bloch's Theorem in the Context of Quaternion Analysis\footnote{The present article is a preliminary version, submitted to Computational Methods and Function Theory.}}
\author{K. G\"urlebeck\thanks{Bauhaus-Universit\"at Weimar, Institut f\"ur Mathematik/Physik, Coudraystr. 13B, D-99421 Weimar, Germany. Email: klaus.guerlebeck@uni-weimar.de} \, and 
J. Morais\thanks{Freiberg University of Mining and Technology, Institute of Applied Analysis, D-09596 Freiberg, Germany. Email: joao.pedro.morais@ua.pt}}
\date{}
\newtheorem{Theorem}{Theorem}[section]
\newtheorem{Lemma}{Lemma}[section]
\newtheorem{Definition}{Definition}[section]
\newtheorem{Remark}{Remark}[section]
\newtheorem{Proposition}{Proposition}[section]
\begin{document}
\maketitle
\begin{abstract}
The classical theorem of Bloch (1924) asserts that if $f$ is a holomorphic function on a region that contains the closed unit disk $|z|\leq 1$ such that $f(0) = 0$ and $|f'(0)| = 1$, 
then the image domain contains discs of radius $\frac{3}{2}-\sqrt{2} > \frac{1}{12}$. The optimal value is known as Bloch's constant and  $\frac{1}{12}$ is not the best possible. In this paper we give a direct generalization of Bloch's theorem to the three-dimensional Euclidean space in the framework of quaternion analysis. We compute explicitly a lower bound for the Bloch constant.
\end{abstract}

\noindent {\bf Keywords}: {\small Quaternion analysis, Riesz System, Bloch's theorem, Bloch constant.}

\noindent {\bf MSC Subject-Classification}: 30G35, 32A05.

\section{Introduction and statement of results}

Quaternion analysis is a higher dimensional function theory offering both a generalization of complex analysis in the plane and a refinement of classical harmonic analysis. The rich structure of this function theory involves the study of quaternion-valued functions that are defined in open subsets of $\mathbb{R}^{n}$ ($n=3, 4$) and that are solutions of generalized Cauchy-Riemann or Dirac systems. They are often called monogenic functions. Yet quaternion analysis has become a major research area in mathematics having connections with boundary value problems and partial differential equations theory or other fields of physics and engineering. For a thorough treatment of this function theory, the reader is referred to \cite{GS1989,GS1997,KS1996,K2003,ShapiroVasilevski11995,ShapiroVasilevski21995}.

In complex analysis much effort has been placed in the study of the classical Schwarz's lemma during the last century, starting for example with the famous works of Schwarz \cite{Schwarz1890}, Pick \cite{Pick1916}, Ahlfors \cite{Ahlfors1938} and Carath\'eodory \cite{Caratheodory1952}, and many others. Their analyses are not only useful auxiliary tools, but even provide powerful information to study classical problems of the theory of conformal maps, so that this research domain has developed into a field of central and vast interest within complex function theory. In addition to various general problems in the geometric theory of holomorphic functions, it naturally embraces Cauchy's inequalities, maximum modulus principle, versions of Schwarz-Pick and Bohr theorems, as well as Bloch's theorem, which are fundamental results with important consequences \cite{MindaSchober1983}. Theorems of this type seem to be more and more involved in progress in conformal geometry in higher dimensions, and in particular in the quaternion analysis setting. As a first step towards in a series of papers \cite{GueJoao2007,GueJoao2009,GueJoao2011} the authors investigated an higher dimensional counterpart of Bohr's phenomenon in the context of quaternion analysis. A lot of deeper results and extended list of references concerning this theorem for monogenic functions in $\mathbb{R}^3$, as well as its different modifications, can be found in \cite{JoaoThesis2009} Ch.3 (cf. also \cite{GueJoao2011}).

Recently, particular attention has also been devoted to the generalization of Bloch's theorem to higher dimensions. It has been conjectured by Eremenko \cite{Eremenko2000} that there exists a version of Bloch's theorem for $K$-quasiregular mappings on the unit ball, as well as for quasimeromorphic mappings. We mention that Bloch's theorem in connection with quasiregular holomorphic mappings in several complex variables has also been studied in \cite{ChenGauthier2001}. In this article we shall pose the question of whether Bloch theorem can be generalized to the context of quaternion analysis. We confine ourselves to an examination of the image domain of a monogenic function defined in a ball of $\mathbb{R}^3$ with values in the reduced quaternions (identified with $\mathbb{R}^3$). This class of functions coincides with the solutions of the well known Riesz system and shows more analogies to complex holomorphic functions than the more general class of quaternion-valued monogenic functions. We shall say Bloch's original proof \cite{Bloch1925} depended on the theory of the comparison of two power series used by Wiman \cite{Wiman1914} in the case of integral functions. Almost simultaneously, and working independently Landau and Valiron simplified Bloch's arguments considerably \cite{LandauValiron1929}. There are many other proofs of Bloch's theorem, including works by Landau \cite{Landau1926}, Carath\'eodory \cite{Caratheodory1929}, Heins \cite{Heins1962} and Pommerenke \cite{Pommerenke1964}. The reference list does not claim to be complete. Further references can be found in the books \cite{Ahlfors1973,Remmert1998}. Here we follow closely the proof given by Estermann \cite{Estermann1971} because of its geometric character. With little fundamental alteration his proof is considerably simplified compared to the previous ones, so that he establishes some estimates for the Fourier coefficients of a holomorphic function by the growth of the maximum modulus of its complex derivative.

One main reason for pursuing this direction is that a sufficiently well developed theory already exists for the construction of an appropriate monogenic Fourier series by means of quaternion analysis tools. In \cite{Cacao2004,IM2006} a few structural properties of the complex Fourier series expansion could be generalized in this context (see also \cite{GueJoao2009,JoaoThesis2009,JoaoGue2011} for more details). Therein, one could obtain, similar to the complex case, explicit series representations of the hypercomplex derivative and primitive based on a monogenic Fourier series expansion in terms of solid spherical monogenics. This gives us an interesting way of motivating addition itself. A second reason is that while the underlying theorem is essentially differential geometric in character, we manage to give a purely function theoretic proof for the case of monogenic functions in $\mathbb{R}^3$. There are a few attempts to generalize Bloch's theorem to higher dimensions. Without claiming completeness we mention here the paper by Rochon \cite{Rochon2001}, who stated a Bloch-type theorem for hyperholomorphic functions with values in the bicomplex numbers. Earlier, already Wu proved in \cite{Wu1967} a Bloch-type theorem for quasiconformal mappings in $\mathbb{C}^n$. Both approaches as well as the majority of proofs of Bloch's theorem make use of the commutativity of the multiplication in the underlying field of coefficients or in the algebra of holomorphic functions. With respect to the already mentioned geometric background it is also of theoretical interest to see whether a Bloch-type theorem can be proved if the underlying structure is not commutative as in the case of quaternions and quaternion-valued monogenic functions. This understanding can be the basis for more generalizations of Bloch's theorem. We will not consider concrete applications of Bloch's theorem in this paper.

\section{Basic notions}

This section is devoted to the exposition of some basic algebraic facts about real quaternions, which we use throughout of this paper. For all what follows we will work in $\mathbb{H}$, the skew field of real quaternions. This means we can express each element $\mathbf{z} \in \mathbb{H}$ uniquely in the form $\mathbf{z}=z_0 + z_1 \textbf{i} + z_2 \textbf{j} + z_3 \textbf{k}$, with real numbers $z_i$ $(i=0,1,2,3)$, where the imaginary units $\textbf{i}$, $\textbf{j}$, and $\textbf{k}$ stand for the elements of the basis of $\mathbb{H}$, subject to the multiplication rules
\begin{eqnarray*}
\textbf{i}^2 = \textbf{j}^2 = \textbf{k}^2 = -1; \quad \; \textbf{i} \textbf{j} = \textbf{k} = - \textbf{j} \textbf{i}, \quad \;
\textbf{j} \textbf{k} = \textbf{i} = - \textbf{k} \textbf{j}, \quad \;
\textbf{k} \textbf{i} = \textbf{j} = - \textbf{i} \textbf{k}.
\end{eqnarray*}

As usual, the real vector space $\mathbb{R}^4$ may be embedded in $\mathbb{H}$ by identifying the element $z := (z_0,z_1,z_2,z_3) \in \mathbb{R}^4$ with $\mathbf{z} := z_0 + z_1 \mathbf{i} + z_2 \mathbf{j} + z_3 \mathbf{k} \in \mathbb{H}$. In the sequel, consider the subset $\mathcal{A} := {\rm span}_{\mathbb{R}}\{1,\mathbf{i},\mathbf{j}\}$ of $\mathbb{H}$. Then, the real vector space $\mathbb{R}^3$ may be embedded in $\mathcal{A}$ via the identification of $x := (x_0,x_1,x_2)=(x_0,\underline{x}) \in \mathbb{R}^3$ with the reduced quaternion $\textbf{x} := x_0 + x_1 \textbf{i} + x_2 \textbf{j} \in \mathcal{A}$. As a matter of fact, throughout the text we will often use the symbol $x$ to represent a point in $\mathbb{R}^3$ and $\mathbf{x}$ to represent the corresponding reduced quaternion. Also, we emphasize that $\mathcal{A}$ is a real vectorial subspace, but not a subalgebra, of $\mathbb{H}$. For any $\textbf{x} := x_0 + x_1 \textbf{i} + x_2 \textbf{j} \in \mathcal{A}$ we write $\overline{\mathbf{x}}$ for $x_0 - x_1 \textbf{i} - x_2 \textbf{j}$, and call it the quaternion conjugate of $\textbf{x}$. Also $|\mathbf{x}|$ is $\sqrt{\mathbf{x} \overline{\mathbf{x}}} = \sqrt{\overline{\mathbf{x}}\mathbf{x}}$, the non-negative square root of $\mathbf{x} \overline{\mathbf{x}} = \overline{\mathbf{x}}\mathbf{x} = x_0^2+x_1^2+x_2^2$. This number is called the norm of $\mathbf{x}$, and it coincides with the corresponding Euclidean norm of $x$ as a vector in $\mathbb{R}^3$. Yet $x_0$ is called the scalar part of $\mathbf{x}$, $x_1 \mathbf{i} + x_2 \mathbf{j}$ the vector part of $\mathbf{x}$, and we write $x_0 = \mathbf{Sc}(\mathbf{x})$, $x_1 \mathbf{i} + x_2 \mathbf{j} = \mathbf{Vec}(\mathbf{x})$. We shall always assume the quaternion $0+0\textbf{i}+0\textbf{j}:=\mathbf{0}_{\mathcal{A}}$ to be the neutral element of addition in the sequel.

Now, let $\Omega$ be an open subset of $\mathbb{R}^3$ with a piecewise smooth boundary. The standard form of a reduced quaternion-valued function or, briefly, an $\mathcal{A}$-valued function, will be taken to be
\begin{eqnarray*}
\mathbf{f} : \Omega \longrightarrow \mathcal{A}, \;\;\;\; \mathbf{f}(x) = [\mathbf{f}(x)]_0 + [\mathbf{f}(x)]_1 \textbf{i} + [\mathbf{f}(x)]_2 \textbf{j},
\end{eqnarray*}
where $[\mathbf{f}]_i$ $(i=0,1,2)$ are real-valued functions defined in $\Omega$. Properties such as continuity, differentiability, integrability, and so on, which are ascribed to $\mathbf{f}$ have to be fulfilled by all components $[\mathbf{f}]_i$. Let $B_r(0):=B_r$ be the ball of radius $r$ in $\mathbb{R}^3$ centered at the origin. We further introduce the real-linear Hilbert space of square integrable $\mathcal{A}$-valued functions defined on $B_r$, that we denote by $L_2(B_r;\mathcal{A};\mathbb{R})$. In this assignment, the scalar inner product is defined by
\begin{eqnarray} \label{InnerProduct}
<\mathbf{f},\mathbf{g}>_{L_2(B_r;\mathcal{A};\mathbb{R})} \, = \int_{B_r}\;{\mathbf{Sc}}({\overline{\mathbf{f}}\,\mathbf{g}) \, dV_r} \,,
\end{eqnarray}
where $dV_r$ denotes the Lebesgue measure on $B_r$.

Matters become interesting when we consider the notion of monogenicity, which is introduced by means of the so-called generalized Cauchy-Riemann operator
\begin{eqnarray} \label{CauchyRiemannOperator}
D = \partial_{x_0} + \textbf{i} \,\partial_{x_1} + \textbf{j} \,\partial_{x_2}.
\end{eqnarray}

\begin{Definition} \rm {(Monogenicity)}
A continuously real-differentiable $\mathcal{A}$-valued function $\mathbf{f}$ is called monogenic in $\Omega$ if $D\mathbf{f}=\mathbf{0}_{\mathcal{A}}$ in $\Omega$.
\end{Definition}

As the generalized Cauchy-Riemann operator $(\ref{CauchyRiemannOperator})$ and its conjugate
\begin{eqnarray*}
\overline{D} = \partial_{x_0} - \textbf{i} \,\partial_{x_1} - \textbf{j} \,\partial_{x_2}
\end{eqnarray*}
factorize the Laplace operator in $\mathbb{R}^3$ in the sense that $\Delta_3 = D \overline{D} = \overline{D} D$, it follows that a monogenic function in $\Omega$ is harmonic in $\Omega$, and so are all its components.

An additional advantage is the realization that any monogenic $\mathcal{A}$-valued function is two-sided monogenic. In other words, this means it satisfies simultaneously the 
equations $D\mathbf{f}=\mathbf{f}D=\mathbf{0}_{\mathcal{A}}$, which are equivalent to the system
\begin{eqnarray*}
{\rm (R)} \left\{ \begin{array} {ccc}
{\rm div} \overline{\mathbf{f}} &=& 0 \\[1.0ex]
{\rm curl} \overline{\mathbf{f}} &=& 0
\end{array}\right. 
\, \Longleftrightarrow \,
\left\{ \begin{array} {ccc}
\displaystyle \partial_{x_0} [\mathbf{f}]_0 - \sum_{i=1}^2 \partial_{x_i} [\mathbf{f}]_i = 0 && \\
\hspace{0.63cm} \partial_{x_j} [\mathbf{f}]_i + \partial_{x_i} [\mathbf{f}]_j = 0 && (i \neq j, \, 0 \leq i, j \leq 2).
\end{array}\right.
\end{eqnarray*}

The system (R) is known as Riesz system \cite{Riesz1958}. It clearly generalizes the classical Cauchy-Riemann system for holomorphic functions in the complex plane. Following \cite{Leutwiler2001}, the solutions of the system (R) are called (R)-solutions. The subspace of polynomial (R)-solutions of degree $n$ will be denoted by $\mathcal{R}^+(B_r;\mathcal{A};n)$. In \cite{Leutwiler2001}, it is shown that the space $\mathcal{R}^+(B_r;\mathcal{A};n)$ has dimension $2n+3$. We also denote by $\mathcal{R}^+(B_r;\mathcal{A}):=L_2(B_r;\mathcal{A}; \mathbb{R}) \cap \ker D$ the space of square integrable $\mathcal{A}$-valued monogenic functions defined in $B_r$.\\

Ultimately, we recall some fundamental definitions and notations which will be needed through the text.
\begin{Definition} {\rm (Hypercomplex Derivative, see \cite{GueMal1999,MitelmanShapiro1995,Sud1979})}
Let $\mathbf{f}$ be a continuously real-differentiable $\mathcal{A}$-valued function, $(\frac12 \overline{D}) \mathbf{f}$ is called hypercomplex derivative of $\mathbf{f}$.
\end{Definition}

\begin{Definition} {\rm (Hyperholomorphic constant)}
An $\mathcal{A}$-valued monogenic function with an identically vanishing hypercomplex derivative is called hyperholomorphic constant.
\end{Definition}

\begin{Definition} {\rm (Hypercomplex Primitive)}
A continuously real-differentiable $\mathcal{A}$-valued function $\mathbf{F}$ is called monogenic primitive of an $\mathcal{A}$-valued monogenic function $\mathbf{f}$ with respect to the hypercomplex derivative, if $\mathbf{F} \in \ker D$ and $(\frac12 \overline{D}) \mathbf{F}=\mathbf{f}$. For a given $\mathbf{f} \in \ker D$, if such function $\mathbf{F}$ exists, we denote 
$\mathbf{F}:=\mathcal{P}(\mathbf{f})$.
\end{Definition}

Related to the previous paragraph, we will use an operator approach as for instance illustrated in \cite{Cacao2004}, where a primitivation operator acting on each element of an orthogonal basis is defined and extended by continuity to the whole space. Analogously to the complex case we speak about the primitive of some given function $\mathbf{f}$ if it is the result of the application of the operator $\mathcal{P}$ to $\mathbf{f}$, without adding any hyperholomorphic constant. "Omitting" the constants means in fact to look for the (unique) primitive that is orthogonal to the hyperholomorphic constants (see e.g. \cite{GueJoao2011}).

\section{A special system of homogeneous monogenic polynomials as solutions of the Riesz system in $\mathbb{R}^3$}

The following constructions are based on the introduction of a standard system of spherical harmonics as shown e.g. in \cite{Sansone1959}. We use spherical coordinates,
\begin{equation*}
x_0 = r \cos \theta, \; x_1 = r \sin \theta \cos \varphi, \; x_2 = r
\sin \theta \sin \varphi,
\end{equation*} where
$0 < r < \infty$, $0 < \theta \leq \pi$, and $0 < \varphi \leq 2\pi$. We consider the set of homogeneous harmonic polynomials,
\begin{equation} \label{HHP}
\{ r^{n+1} U^0_{n+1}, r^{n+1} U^m_{n+1}, r^{n+1} V^m_{n+1},
m=1,...,n+1  \}_{n \in \mathbb{N}_0}
\end{equation}
formed by the extensions in the ball of the spherical harmonics 
\begin{eqnarray} \label{sphericalharmonics}
U^0_{n+1}(\theta,\varphi) &=& P_{n+1}(\cos \theta) \nonumber \\[0.5ex]
U^m_{n+1}(\theta,\varphi) &=& P^m_{n+1}(\cos \theta) \cos (m \varphi) \\[0.5ex]
V^m_{n+1}(\theta,\varphi) &=& P^m_{n+1}(\cos \theta) \sin (m \varphi), \qquad m=1, \dots ,n+1 \nonumber.
\end{eqnarray}
Here, $P_{n+1}$ stands for the Legendre polynomial of degree $n+1$ and the functions $P^m_{n+1}$, where $m=1, \ldots ,n+1,$ are the associated Legendre functions.
In \cite{Cacao2004} and \cite{IGS2006}, a special $\mathbb{R}$-linear complete orthonormal system of $\mathcal{A}$-valued homogeneous monogenic polynomials in the unit ball of $\mathbb{R}^3$ is explicitly constructed by applying the operator $\frac12\overline{D}$ to the system (\ref{HHP}).

Restricting the resulting solid spherical monogenics to the surface of the unit ball we obtain a system of spherical monogenics, denoted by
\begin{eqnarray*}
\left\{ \mathbf{X}^{0}_n, \mathbf{X}^{m}_n, \mathbf{Y}^{m}_n : m=1,\ldots,n+1\right\}_{n \in \mathbb{N}_0}
\end{eqnarray*}

This system can be seen as a refinement of the conventional spherical harmonics, and correspondingly it constitutes an extension of the role of the well known Chebyshev and Legendre polynomials (resp. associated Legendre functions) as shown in \cite{JoaoGue2011}.  More importantly, it can be explicitly constructed by using recurrence relations and preserves some basic properties in common with holomorphic $z$-powers. The fundamental references for the preceding arguments and explicit expressions of these special spherical monogenics are \cite{GueJoao22011,JoaoThesis2009,JoaoGue2011}. For the purposes of this paper, we will give a different, less elaborate exposition of their features.

We recall from \cite{Cacao2004} and \cite{CGBHanoi2005} the following properties:
\begin{enumerate}
\item[1.] The functions $\mathbf{X}^{0,\dagger}_n:=r^n \mathbf{X}^{0}_n$, $\mathbf{X}^{m,\dagger}_n:=r^n \mathbf{X}^{m}_n$, and $\mathbf{Y}^{m,\dagger}_n:=r^n \mathbf{Y}^{m}_n$ are homogeneous monogenic polynomials;\\[-1.5ex]
\item[2.] For each $n=0,1,\ldots$, the polynomials $\mathbf{X}^{0,\dagger}_n$, $\mathbf{X}^{m,\dagger}_n$, $\mathbf{Y}^{m,\dagger}_n$ $(m=1,\ldots,n+1)$ form a complete orthogonal system in $\mathcal{R}^{+}(B_r;\mathcal{A})$, and their norms are explicitly given by
\begin{eqnarray*}
&& \|\mathbf{X}^{0,\dagger}_n\|_{L_2(B_r;\mathcal{A};\mathbb{R})} \,= \, \sqrt{\frac{r^{2n+3}}{2n+3}} \sqrt{\pi \, (n+1)}, \\[0.5ex]
&& \|\mathbf{X}^{m,\dagger}_n\|_{L_2(B_r;\mathcal{A};\mathbb{R})} \,= \, \|\mathbf{Y}^{m,\dagger}_n\|_{L_2(B_r;\mathcal{A};\mathbb{R})} \\
&& \hspace{2.95cm} = \, \sqrt{\frac{r^{2n+3}}{2n+3}} \sqrt{\frac{\pi}{2} (n+1) \frac{(n+1+m)!}{(n+1-m)!}};\\[-1.5ex]
\end{eqnarray*}
\item[3.] For $n \geq 1$, we have $(\frac{1}{2} \overline{D}) \mathbf{X}_n^{l,\dagger} = (n+l+1) \mathbf{X}_{n-1}^{l,\dagger}$ $(l=0,\ldots,n)$ and $(\frac{1}{2} \overline{D}) \mathbf{Y}_n^{m,\dagger} = (n+m+1) \mathbf{Y}_{n-1}^{m,\dagger}$ $(m=1,\ldots,n)$, i.e. the hypercomplex differentiation of a basis function delivers a multiple of another basis function one degree lower;\\[-1.0ex]
\item[4.] The polynomials $\mathbf{X}^{n+1,\dagger}_n$ and $\mathbf{Y}^{n+1,\dagger}_n$ are hyperholomorphic constants;\\[-1.0ex]
\item[5.] For $n \geq 1$, we have $\mathcal{P} (\mathbf{X}_n^{l,\dagger}) = \frac{1}{(n+l+2)} \mathbf{X}_{n+1}^{l,\dagger}$ $(l=0,\ldots,n+1)$ and $\mathcal{P} (\mathbf{Y}_n^{m,\dagger}) = \frac{1}{(n+m+2)} \mathbf{Y}_{n+1}^{m,\dagger}$ $(m=1,\ldots,n+1)$, i.e. the hypercomplex primitivation of a basis function delivers a multiple of another basis function one degree upper.
\end{enumerate}

Based on Statement 2, we can easily write down the Fourier expansion of a square integrable ${\mathcal A}$-valued monogenic function. Furthermore, according to the fact that the polynomials $\mathbf{X}_n^{n+1,\dagger}$ and $\mathbf{Y}_n^{n+1,\dagger}$ are hyperholomorphic constants, in \cite{GueJoao2011} we have proved that each $\mathcal{A}$-valued monogenic function can be decomposed in an orthogonal sum of a monogenic "main part" of the function $(\mathbf{g})$ and a hyperholomorphic constant $(\mathbf{h})$. Putting these facts together, next we formulate a modified version of the aforementioned result, which happens to be the more suitable upon further studying the Bloch's theorem. To really understand the above claims, we strongly recomment the reader to consult \cite{GueJoao2011}.
\begin{Lemma} {\rm (Fourier Orthogonal Expansion)} \label{FourierOrthogonalExpansion}
Let $\mathbf{f} \in \mathcal{R}^+(B_r;\mathcal{A})$. The function $\mathbf{f}$ can be represented in the following way
\begin{eqnarray} \label{FourierSeries}
\mathbf{f}(x) &:=& \mathbf{g}(x) \, + \, \mathbf{h}(\underline{x}) \\
&=& \sum_{n=0}^{\infty} \left(\mathbf{X}_n^{0,\dagger,\ast}(x) \, a_n^{0,\ast_r} \, + \, \sum_{m=1}^{n} \left[
\mathbf{X}_n^{m,\dagger,\ast} (x) \, a_n^{m,\ast_r} + \mathbf{Y}_n^{m,\dagger,\ast}(x) \, b_n^{m,\ast_r} \right] \right) \nonumber \\
&+& \sum_{n=0}^{\infty} \left[ \mathbf{X}_n^{n+1,\dagger,\ast}(\underline{x}) \, a_n^{n+1,\ast_r} + \mathbf{Y}_n^{n+1,\dagger,\ast}(\underline{x}) \, b_n^{n+1,\ast_r} \right] \nonumber,
\end{eqnarray}
where for each $n \in \mathbb{N}_0$, $a_n^{0,\ast_r}, a_n^{m,\ast_r}, b_n^{m,\ast_r}$ $(m=1, \dots, n+1)$ are the associated Fourier coefficients.
\end{Lemma}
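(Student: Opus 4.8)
The plan is to obtain the representation \eqref{FourierSeries} as a direct consequence of Statement~2 in the list of properties above, combined with the orthogonal decomposition into a monogenic main part and a hyperholomorphic constant established in \cite{GueJoao2011}. First I would invoke Statement~2: for each fixed degree $n$, the solid spherical monogenics $\mathbf{X}^{0,\dagger}_n$, $\mathbf{X}^{m,\dagger}_n$, $\mathbf{Y}^{m,\dagger}_n$ ($m=1,\dots,n+1$) form a complete orthogonal system in $\mathcal{R}^{+}(B_r;\mathcal{A})$, so that the union over all $n\in\mathbb{N}_0$ is a complete orthogonal system in the Hilbert space $\mathcal{R}^{+}(B_r;\mathcal{A}) = L_2(B_r;\mathcal{A};\mathbb{R})\cap\ker D$. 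Hence every $\mathbf{f}\in\mathcal{R}^{+}(B_r;\mathcal{A})$ admits an $L_2$-convergent Fourier expansion along this basis, with coefficients given by the usual formula $a_n^{0,\ast_r} = \langle \mathbf{X}^{0,\dagger,\ast}_n,\mathbf{f}\rangle / \|\mathbf{X}^{0,\dagger}_n\|^2$ and analogously for the other coefficients, where the superscript $\ast$ denotes the normalized basis elements (using the explicit norms recalled in Statement~2) and $\ast_r$ records the dependence on the radius $r$.

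Next I would split the summation over $m$ at the index $m=n+1$. By Statement~4 the top-order polynomials $\mathbf{X}^{n+1,\dagger}_n$ and $\mathbf{Y}^{n+1,\dagger}_n$ are precisely the hyperholomorphic constants in degree $n$; moreover, as noted in the text, these are functions of $\underline{x}$ only. Collecting these terms into a separate series defines the hyperholomorphic constant part $\mathbf{h}(\underline{x}) = \sum_{n=0}^{\infty}\bigl[\mathbf{X}_n^{n+1,\dagger,\ast}(\underline{x})\,a_n^{n+1,\ast_r} + \mathbf{Y}_n^{n+1,\dagger,\ast}(\underline{x})\,b_n^{n+1,\ast_r}\bigr]$, while the remaining terms (indices $0\le m\le n$ together with the $\mathbf{X}^{0,\dagger}_n$ terms) assemble into the main part $\mathbf{g}(x)$. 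That this split is an \emph{orthogonal} decomposition, and that $\mathbf{g}$ contains no nontrivial hyperholomorphic-constant component, is exactly the content of the decomposition theorem from \cite{GueJoao2011}, which I would cite rather than reprove; the point to check is only that the indexing used here matches that reference, i.e. that the hyperholomorphic constants of degree $n$ are spanned by $\mathbf{X}_n^{n+1,\dagger}$ and $\mathbf{Y}_n^{n+1,\dagger}$ and nothing else, which follows from the dimension count $\dim\mathcal{R}^+(B_r;\mathcal{A};n) = 2n+3$ together with Statement~3 (the hypercomplex derivative maps the remaining $2n+1$ basis polynomials of degree $n$ onto a basis of degree $n-1$, hence is injective on that complement).

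The only genuinely delicate point is the mode of convergence of the series and the legitimacy of rearranging the doubly-indexed sum into the stated $\mathbf{g}+\mathbf{h}$ form. Since the full system is orthogonal in $L_2(B_r;\mathcal{A};\mathbb{R})$, Bessel's inequality gives $\sum_n\bigl(|a_n^{0,\ast_r}|^2\|\mathbf{X}^{0,\dagger,\ast}_n\|^2 + \sum_{m=1}^{n+1}[\,\cdots\,]\bigr) = \|\mathbf{f}\|^2 < \infty$, so unconditional $L_2$-convergence holds and any regrouping of the terms — in particular the split at $m=n+1$ — preserves the sum. I would state this, note that on compact subsets of $B_r$ one additionally gets normal (uniform) convergence because monogenic functions satisfy interior estimates analogous to the holomorphic case (so the series may later be differentiated termwise, which is what the Bloch-type argument will need), and leave the routine verification of the coefficient formulae to the cited references. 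The main obstacle, such as it is, is bookkeeping: keeping the normalization factors $\ast$, the radius-dependence $\ast_r$, and the two index ranges consistent with \cite{GueJoao2011,JoaoThesis2009} rather than any substantive analytic difficulty.
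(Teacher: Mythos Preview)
Your proposal is correct and matches the paper's approach: the paper does not give a self-contained proof of this lemma but presents it as a reformulation of the orthogonal decomposition established in \cite{GueJoao2011}, combined with the completeness of the system recorded in Statement~2 and the identification of the hyperholomorphic constants in Statement~4. Your write-up is in fact more explicit than the paper's, which simply refers the reader to \cite{GueJoao2011} for the details.
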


\begin{Remark}
It is intuitively clear that the method which has led us here implies that the coefficients $a_n^{0,\ast_r}$, $a_n^{m,\ast_r}$ and $b_n^{m,\ast_r}$ $(m=1, \dots, n+1)$ are real constants.
\end{Remark}

In closing this section, let us take a look at the pointwise estimates of the basis polynomials proved in \cite{GueJoao2011}. The proof consists in direct manipulations on certain estimates for the associated Legendre functions (e.g., \cite{Lohofer1998} p.179).
\begin{Proposition} \label{modulusHMP}
For $n \in \mathbb{N}_0$ the polynomials $\mathbf{X}^{l,\dagger}_n$ and $\mathbf{Y}^{m,\dagger}_n$ satisfy the following inequalities:
\begin{eqnarray*}
|\mathbf{X}^{l,\dagger}_n(x)| \; & \leq & \; \frac{1}{2} (n+1) \sqrt{\frac{(n+1+l)!}{(n+1-l)!}} \, |x|^n, \;\;\;\;\;\; l=0,\ldots,n+1 \\
|\mathbf{Y}^{m,\dagger}_n(x)| \; & \leq & \; \frac{1}{2} (n+1) \sqrt{\frac{(n+1+m)!}{(n+1-m)!}} \, |x|^n, \;\;\; m=1,\ldots,n+1.
\end{eqnarray*}
\end{Proposition}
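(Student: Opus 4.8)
The plan is to derive the pointwise estimates directly from the explicit structure of the spherical monogenics together with the known bounds for associated Legendre functions. Since $\mathbf{X}^{l,\dagger}_n$ and $\mathbf{Y}^{m,\dagger}_n$ are homogeneous of degree $n$, it suffices to bound them on the unit sphere $|x|=1$ and then multiply by $|x|^n$; so I would first reduce to estimating $|\mathbf{X}^l_n(\theta,\varphi)|$ and $|\mathbf{Y}^m_n(\theta,\varphi)|$ on $S^2$. Recall that these spherical monogenics arise by applying $\tfrac12\overline{D}$ to the solid harmonics $r^{n+1}U^l_{n+1}$, $r^{n+1}V^m_{n+1}$ built from $P^m_{n+1}(\cos\theta)\cos(m\varphi)$ and $P^m_{n+1}(\cos\theta)\sin(m\varphi)$. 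I would use the explicit component formulas for $\mathbf{X}^{l,\dagger}_n$, $\mathbf{Y}^{m,\dagger}_n$ recorded in the references (e.g. \cite{GueJoao22011,JoaoGue2011}): each component is a real linear combination of associated Legendre functions $P^{l\pm1}_{n+1}(\cos\theta)$ (and $P^l_{n+1}$) times trigonometric factors $\cos$ or $\sin$ of $(l\pm1)\varphi$, with coefficients that are rational in $n$ and $l$.

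Next I would invoke the classical pointwise bound for associated Legendre functions on $[-1,1]$, in the normalization used here, namely an inequality of the form $|P^m_{n+1}(\cos\theta)| \le C(n,m)$ with $C(n,m)$ comparable to $\sqrt{(n+1+m)!/(n+1-m)!}$; the stated source \cite{Lohofer1998}, p.\ 179, provides exactly such an estimate. Substituting this into the component expressions, bounding $|\cos(\cdot)|,|\sin(\cdot)|\le 1$, and summing the (boundedly many) terms, one obtains $|\mathbf{X}^{l,\dagger}_n(x)|\le A(n,l)\,|x|^n$ and $|\mathbf{Y}^{m,\dagger}_n(x)|\le A(n,m)\,|x|^n$ for an explicit $A$. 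The remaining task is purely arithmetic: showing that, after collecting the coefficients from the $\tfrac12\overline{D}$-action and from the Legendre bounds, $A(n,l)$ simplifies to exactly $\tfrac12(n+1)\sqrt{(n+1+l)!/(n+1-l)!}$. This telescoping of factorials — checking that the factors $(n+1\pm l)$ produced by shifting the order of the Legendre function match the ratio of factorials in the claimed bound — is the one place where care is needed, and I expect it to be the main technical obstacle; it should nonetheless follow from straightforward manipulation of the identities $\frac{(n+1+(l+1))!}{(n+1-(l+1))!} = (n+1+l+1)\,(n+1-l)^{-1}\,\frac{(n+1+l)!}{(n+1-l)!}\cdot(\text{corrections})$ handled term by term.

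Finally I would assemble the pieces: fix $n$, treat the case $l=0$ (where only $P_{n+1}$ and $P^1_{n+1}$ enter) separately from $1\le l\le n+1$ if the component formulas differ in form, and likewise note that the $\mathbf{Y}$-estimate is identical to the $\mathbf{X}$-estimate because the only change is $\cos(m\varphi)\leftrightarrow\sin(m\varphi)$, which does not affect absolute values. A cross-check against Statement~2 of the preceding list is available, since the $L_2(B_r;\mathcal{A};\mathbb{R})$-norms of $\mathbf{X}^{m,\dagger}_n$ carry the same factorial factor $\sqrt{(n+1+m)!/(n+1-m)!}$, so the pointwise bound and the norm are mutually consistent up to the volume/normalization constants; this consistency check guards against an error in the factorial bookkeeping. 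No deep idea beyond the Legendre estimate is required — the content of the proposition is entirely in making the constant explicit.
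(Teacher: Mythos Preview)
Your proposal is correct and matches the paper's own approach: the paper does not give a self-contained proof but states that the result is proved in \cite{GueJoao2011} by ``direct manipulations on certain estimates for the associated Legendre functions'' from \cite{Lohofer1998}, p.~179, which is exactly the strategy you outline (reduce by homogeneity, plug the Loh\"ofer bound into the explicit component formulas, and simplify the factorial constants).
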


\section{Estimates for $\mathcal{A}$-valued monogenic functions bounded with respect to their hypercomplex derivative}

In the present section we may use the properties of the monogenic polynomials described in the previous section to obtain certain estimates for the Fourier coefficients of an 
$\mathcal{A}$-valued monogenic function by the growth of the maximum modulus of its hypercomplex derivative. Thanks to these properties, such estimates are fairly simple and elegant in comparison with the analogous estimates given by Estermann in the complex case \cite{Estermann1971}.

In the sequel, we shall introduce the notation $\mathcal{M}(\mathbf{f},r) = \max_{B_r} |\mathbf{f}(x)|$, $0 \leq |x| \leq r$ to be used henceforth. This function of $r$ is called the maximum modulus function of $\mathbf{f}$. In preparation for the next section we shall determine an estimate for $\mathcal{M}\left(\mathcal{P}\left\{(\frac{1}{2}\overline{D})\mathbf{f}(x) - (\frac{1}{2}\overline{D})\mathbf{f}(0) \right\}\hspace{-0.1cm},r\right)$ in terms of the $C$-norm of $(\frac{1}{2}\overline{D})\mathbf{f}(x) - (\frac{1}{2}\overline{D})\mathbf{f}(0)$. Next we formulate the result.
\begin{Lemma} \label{Lemma1}
Let $\mathbf{f} \in \mathcal{R}^+(B_r;\mathcal{A})$ such that $\mathbf{f}(0)=\mathbf{0}_{\mathcal{A}}$. Then, for $0 \leq |x| < r$ we have the following inequality:
\begin{eqnarray*}
&& |\mathcal{P}_r \left\{(\frac{1}{2}\overline{D})\mathbf{f}(x) - (\frac{1}{2}\overline{D})\mathbf{f}(0) \right\}| \\
&& \hspace{1.0cm} \leq \frac{2}{\sqrt{3}} \frac{|x|^2 (4|x|^2+9r^2-11|x|r)}{(r-|x|)^3} \,
\mathcal{M}\left((\frac{1}{2}\overline{D})\mathbf{f}(x) - (\frac{1}{2}\overline{D})\mathbf{f}(0),r\right).
\end{eqnarray*}
\end{Lemma}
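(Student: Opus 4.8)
The plan is to imitate Estermann's argument inside the monogenic Fourier calculus of Lemma~\ref{FourierOrthogonalExpansion} and the properties 1.--5. Put $\mathbf{G}(x):=(\tfrac{1}{2}\overline{D})\mathbf{f}(x)-(\tfrac{1}{2}\overline{D})\mathbf{f}(0)$. Since $\mathbf{f}\in\mathcal{R}^+(B_r;\mathcal{A})$, also $\mathbf{G}\in\mathcal{R}^+(B_r;\mathcal{A})$, $\mathbf{G}(0)=\mathbf{0}_{\mathcal{A}}$, and because $(\tfrac{1}{2}\overline{D})\mathbf{f}(0)$ is exactly the degree-zero part of $(\tfrac{1}{2}\overline{D})\mathbf{f}$, the Fourier series of $\mathbf{G}$ starts at degree $1$. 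Differentiating the expansion of $\mathbf{f}$ by Statement~3 and then primitivating by Statement~5, the multiplier $n+l+1$ created by $\tfrac{1}{2}\overline{D}$ is cancelled by the reciprocal multiplier produced by $\mathcal{P}_r$ (Statement~5 used one degree lower); hence $\mathcal{P}_r\mathbf{G}$ is just the ``main part'' $\mathbf{g}$ of $\mathbf{f}$ with its degree-$0$ and degree-$1$ homogeneous parts deleted, i.e.\ $\mathcal{P}_r\mathbf{G}=\sum_{n\ge 2}\mathbf{g}_n$ with $\mathbf{g}_n=\mathcal{P}_r\mathbf{G}_{n-1}$, $\mathbf{G}_{n-1}$ being the degree-$(n-1)$ part of $\mathbf{G}$. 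The lemma thus reduces to the homogeneous estimate $|\mathbf{g}_n(x)|\le\tfrac{2}{\sqrt3}(n+1)^2|x|^n r^{1-n}\,\mathcal{M}(\mathbf{G},r)$ for each $n\ge 2$.

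For the coefficients of $\mathbf{G}_{n-1}$ I would restrict $\mathbf{G}$ to spheres $|x|=\rho<r$: as $\mathbf{G}_{n-1}$ is the $L_2$-orthogonal projection of $\mathbf{G}|_{|x|=\rho}$ onto the degree-$(n-1)$ spherical monogenics, $\int_{|x|=\rho}|\mathbf{G}_{n-1}|^2\le\int_{|x|=\rho}|\mathbf{G}|^2\le 4\pi\rho^2\mathcal{M}(\mathbf{G},r)^2$; homogeneity, integration in $\rho$ and the passage $\rho\to r^-$ (legitimate because $\mathcal{M}(\mathbf{G},\rho)\le\mathcal{M}(\mathbf{G},r)$ for $\rho<r$) give $\|\mathbf{G}_{n-1}\|_{L_2(B_r;\mathcal{A};\mathbb{R})}\le\mathcal{M}(\mathbf{G},r)\,|B_r|^{1/2}$, and $|B_r|^{1/2}=\tfrac{2\sqrt\pi}{\sqrt3}r^{3/2}$ is the origin of the constant $\tfrac{2}{\sqrt3}$. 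By Statement~2 the family $\{\mathbf{X}^{l,\dagger,\ast_r}_{n-1}\}_{l=0}^{n}\cup\{\mathbf{Y}^{m,\dagger,\ast_r}_{n-1}\}_{m=1}^{n}$ is an orthonormal basis of $\mathcal{R}^+(B_r;\mathcal{A};n-1)$, so Bessel's inequality controls the sum of squares of the Fourier coefficients of $\mathbf{G}_{n-1}$ by $\|\mathbf{G}_{n-1}\|^2_{L_2(B_r;\mathcal{A};\mathbb{R})}$, and Cauchy--Schwarz inside degree $n$ gives $|\mathbf{g}_n(x)|=|\mathcal{P}_r\mathbf{G}_{n-1}(x)|\le\|\mathbf{G}_{n-1}\|_{L_2(B_r;\mathcal{A};\mathbb{R})}\,\big(\sum_j|\mathcal{P}_r\mathbf{e}_j(x)|^2\big)^{1/2}$, where $\{\mathbf{e}_j\}$ is that basis.

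The last ingredient is a pointwise bound on $|\mathcal{P}_r\mathbf{e}_j(x)|$. By Statement~5 and the norm formulas of Statement~2, $\mathcal{P}_r\mathbf{X}^{l,\dagger,\ast_r}_{n-1}$ is an explicit scalar multiple of $\mathbf{X}^{l,\dagger,\ast_r}_{n}$, and $|\mathbf{X}^{l,\dagger,\ast_r}_{n}(x)|$ equals Proposition~\ref{modulusHMP} divided by $\|\mathbf{X}^{l,\dagger}_{n}\|_{L_2(B_r;\mathcal{A};\mathbb{R})}$; the decisive point is that the factorials $\sqrt{(n+1+l)!/(n+1-l)!}$ appearing in Proposition~\ref{modulusHMP} and in the norm cancel completely, leaving $|\mathcal{P}_r\mathbf{e}_j(x)|\le(\text{polynomial in }n)\cdot|x|^{n}r^{-n-1/2}$ uniformly in $l$. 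Summing the $2n+1$ terms, combining with the preceding paragraph and carrying out the remaining elementary estimates of the surviving $n$-factors yields $|\mathbf{g}_n(x)|\le\tfrac{2}{\sqrt3}(n+1)^2|x|^n r^{1-n}\mathcal{M}(\mathbf{G},r)$, whence $|\mathcal{P}_r\mathbf{G}(x)|\le\tfrac{2}{\sqrt3}\,r\,\mathcal{M}(\mathbf{G},r)\sum_{n\ge 2}(n+1)^2(|x|/r)^n$. Since $\sum_{n\ge 2}(n+1)^2 t^n=\dfrac{t^2(4t^2-11t+9)}{(1-t)^3}$, the substitution $t=|x|/r$ turns the right-hand side into $\tfrac{2}{\sqrt3}\,\dfrac{|x|^2(4|x|^2+9r^2-11|x|r)}{(r-|x|)^3}\,\mathcal{M}(\mathbf{G},r)$, which is the assertion. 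The main obstacle is exactly this bookkeeping: one must verify that Proposition~\ref{modulusHMP} together with the norms conspires so that the summation over the order index $l$ produces no logarithmic (harmonic-number) terms --- a careless estimate of $\sum_l\big((n+1)^2-l^2\big)^{-1}$ would --- and that the leftover elementary estimates really yield the clean exponent $(n+1)^2$ with the precise constant $\tfrac{2}{\sqrt3}$; the only genuinely technical subtlety besides this is the boundary passage $\rho\to r^-$, which is harmless since $\mathbf{f}$, and hence $\mathbf{G}$, is controlled on every smaller ball.
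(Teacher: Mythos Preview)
Your proposal is correct and follows essentially the same route as the paper: Fourier-expand $\mathbf{f}$, differentiate and primitivate term by term via Statements~3 and~5, control the Fourier coefficients of $\mathbf{G}=(\tfrac12\overline D)\mathbf{f}-(\tfrac12\overline D)\mathbf{f}(0)$ through the trivial bound $\|\mathbf{G}\|_{L_2(B_r)}\le\mathcal{M}(\mathbf{G},r)\,|B_r|^{1/2}$, apply Proposition~\ref{modulusHMP} together with the norm formulas of Statement~2, and finally sum $\sum_{n\ge2}(n+1)^2(|x|/r)^n$ in closed form. The only packaging difference is that the paper bounds each individual coefficient by Cauchy--Schwarz on its defining integral and then sums by the triangle inequality, whereas you bound the coefficients collectively by Parseval and then Cauchy--Schwarz inside each degree; your route is in fact slightly sharper (the $l$-sum $\sum_l((n+1)^2-l^2)^{-1}$ you worry about is $O(n^{-1}\log n)$, not divergent), so relaxing to $(n+1)^2$ as you do certainly recovers the stated inequality.
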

\begin{proof}
We shall proceed in such a manner that we obtain simultaneously estimates for the Fourier coefficients associated to $\mathbf{f}$, and the existence of Fourier series expansions for the hypercomplex derivative as well as the primitive of an $\mathcal{A}$-valued monogenic function.

To begin with, we consider $\mathbf{f}$ written as in (\ref{FourierSeries}). Since the basis polynomials $\mathbf{X}^{0,\dagger}_n$, $\mathbf{X}^{m,\dagger}_n$ and 
$\mathbf{Y}^{m,\dagger}_n$ are homogeneous (see Statement 1.), a first straightforward computation shows that the value of $\mathbf{f}$ at the
origin is
\begin{eqnarray*}
\mathbf{f}(0) = \frac{1}{2} \sqrt{\frac{3}{\pi \, r^3}} \, (a_0^{0,\ast_r} - a_0^{1,\ast_r} \mathbf{i} - b_0^{1,\ast_r} \mathbf{j}) .
\end{eqnarray*}
By assumption $\mathbf{f}(0)=\mathbf{0}_{\mathcal{A}}$, which yields $a_0^{0,\ast_r} = a_0^{1,\ast_r} = b_0^{1,\ast_r} = 0$. Since the series (\ref{FourierSeries}) is convergent in $L_2(B_r)$, it converges uniformly to $\mathbf{f}$ in each compact subset of $B_r$. Also the series of all partial derivatives converges uniformly to the corresponding partial derivatives of $\mathbf{f}$ in compact subsets of $B_r$. Applying the hypercomplex derivative $\frac{1}{2}\overline{D}$ term by term to the series (\ref{FourierSeries}), and having in mind the underlying orthogonal decomposition it follows formally
\begin{eqnarray*}
&& (\frac{1}{2} \overline{D}) \mathbf{f}(x) \,=\, (\frac{1}{2} \overline{D}) \mathbf{g} + (\frac{1}{2} \overline{D}) \mathbf{h} \\
&& \hspace{-0.5cm} = \; \sum_{n=1}^{\infty} \left[ (\frac{1}{2} \overline{D}) \mathbf{X}_n^{0,\dagger,\ast_r} \, a_n^{0,\ast_r} + \sum_{m=1}^{n} \left( (\frac{1}{2} \overline{D})
\mathbf{X}_n^{m,\dagger,\ast_r} \, a_n^{m,\ast_r} + (\frac{1}{2} \overline{D}) \mathbf{Y}_n^{m,\dagger,\ast_r} \, b_n^{m,\ast_r} \right) \right. \\[0.45ex]
&& \hspace{-0.5cm} + \; \mathbf{0}_{\mathcal{A}}.
\end{eqnarray*}
With these arguments at hand, and using Property 3. we set
\begin{eqnarray} \label{expression1}
&& (\frac{1}{2} \overline{D}) \mathbf{f}(x) - (\frac{1}{2} \overline{D}) \mathbf{f}(0) \nonumber \\
&=& \sum_{n=2}^{\infty} \left[ (\frac{1}{2} \overline{D}) \mathbf{X}_n^{0,\dagger,\ast_r} \, a_n^{0,\ast_r} + \sum_{m=1}^{n} \left( (\frac{1}{2} \overline{D})
\mathbf{X}_n^{m,\dagger,\ast_r} \, a_n^{m,\ast_r} + (\frac{1}{2} \overline{D}) \mathbf{Y}_n^{m,\dagger,\ast_r} \, b_n^{m,\ast_r} \right) \right. \nonumber \\
&=& \sum_{n=2}^{\infty} \left[ \frac{(n+1)}{\|\mathbf{X}_n^{0,\dagger}\|_{L_2(B_r;\mathcal{A};\mathbb{R})}} \mathbf{X}_{n-1}^{0,\dagger} \,
a_n^{0,\ast_r} \right. \nonumber \\
&& \hspace{1.5cm} + \sum_{m=1}^{n} \frac{(n+m+1)}{\|\mathbf{X}_n^{m,\dagger}\|_{L_2(B_r;\mathcal{A};\mathbb{R})}} 
\left( \mathbf{X}_{n-1}^{m,\dagger} \, a_n^{m,\ast_r} + \mathbf{Y}_{n-1}^{m,\dagger} \, b_n^{m,\ast_r} \right).
\end{eqnarray}
By construction, the Fourier coefficients are real-valued, then by applying the linear primitive operator $\mathcal{P}$ term by term to the previous series,  and using Property 5. it follows
\begin{eqnarray} \label{expression2}
&& \mathcal{P}_r \left\{(\frac{1}{2} \overline{D}) \mathbf{f}(x) - (\frac{1}{2} \overline{D}) \mathbf{f}(0) \right\} \nonumber \\
&=& \sum_{n=2}^{\infty} \left[ \frac{(n+1)}{\|\mathbf{X}_n^{0,\dagger}\|_{L_2(B_r;\mathcal{A};\mathbb{R})}} \mathcal{P}(\mathbf{X}_{n-1}^{0,\dagger}) \,
a_n^{0,\ast_r} \right. \nonumber \\
&& \hspace{1.5cm} + \; \sum_{m=1}^{n} \frac{(n+m+1)}{\|\mathbf{X}_n^{m,\dagger}\|_{L_2(B_r;\mathcal{A};\mathbb{R})}} 
\left( \mathcal{P}(\mathbf{X}_{n-1}^{m,\dagger}) \, a_n^{m,\ast_r} + \mathcal{P}(\mathbf{Y}_{n-1}^{m,\dagger}) \, b_n^{m,\ast_r} \right) \nonumber \\
&=& \sum_{n=2}^{\infty} \left[ \frac{1}{\|\mathbf{X}_n^{0,\dagger}\|_{L_2(B_r;\mathcal{A};\mathbb{R})}} \mathbf{X}_n^{0,\dagger} 
\, a_n^{0,\ast_r} \right. + \sum_{m=1}^{n} \frac{\left( \mathbf{X}_n^{m,\dagger} \, a_n^{m,\ast_r} + \mathbf{Y}_n^{m,\dagger} \, b_n^{m,\ast_r} \right)}{\|\mathbf{X}_n^{m,\dagger}\|_{L_2(B_r;\mathcal{A};\mathbb{R})}}.
\end{eqnarray}
We may now prove certain relations between the Fourier coefficients $a_n^{0,\ast_r}$, $a_n^{m,\ast_r}$, and $b_n^{m,\ast_r}$ 
$(m=1,\ldots,n)$ and the factor "$(\frac{1}{2} \overline{D}) \mathbf{f}(x) - (\frac{1}{2} \overline{D}) \mathbf{f}(0)$". Hence multiplying both sides of the expression (\ref{expression1}) by the (orthogonal) homogeneous monogenic polynomials $\{ \mathbf{X}_{n-1}^{0,\dagger}, \mathbf{X}_{n-1}^{m,\dagger}, \mathbf{Y}_{n-1}^{m,\dagger} : m=1,\ldots,n\}$ and integrating over $B_r$, we get the following relations:
\begin{eqnarray*}
a_n^{0,\ast_r} \hspace{-0.325cm} &=& \hspace{-0.325cm} \frac{\|\mathbf{X}_n^{0,\dagger}\|_{L_2(B_r;\mathcal{A};\mathbb{R})}}{\|\mathbf{X}_{n-1}^{0,\dagger}\|^2_{L_2(B_r;\mathcal{A};\mathbb{R})}} \, \frac{1}{(n+1)} \, \int_{B_r} \left\{ (\frac{1}{2} \overline{D}) \mathbf{f}(x) - (\frac{1}{2} \overline{D}) \mathbf{f}(0) \right\} \mathbf{X}_{n-1}^{0,\dagger} \, dV_r \\[0.5ex]
a_n^{m,\ast_r} \hspace{-0.325cm} &=& \hspace{-0.325cm} \frac{\|\mathbf{X}_n^{m,\dagger}\|_{L_2(B_r;\mathcal{A};\mathbb{R})}}{\|\mathbf{X}_{n-1}^{m,\dagger}\|^2_{L_2(B_r;\mathcal{A};\mathbb{R})}} \, \frac{1}{(n+m+1)} \, \int_{B_r} \left\{ (\frac{1}{2} \overline{D}) \mathbf{f}(x) - (\frac{1}{2} \overline{D}) \mathbf{f}(0) \right\} \mathbf{X}_{n-1}^{m,\dagger} \, dV_r \\[0.5ex]
b_n^{m,\ast_r} \hspace{-0.325cm} &=& \hspace{-0.325cm} \frac{\|\mathbf{Y}_n^{m,\dagger}\|_{L_2(B_r;\mathcal{A};\mathbb{R})}}{\|\mathbf{Y}_{n-1}^{m,\dagger}\|^2_{L_2(B_r;\mathcal{A};\mathbb{R})}} \, \frac{1}{(n+m+1)} \, \int_{B_r} \left\{ (\frac{1}{2} \overline{D}) \mathbf{f}(x) - (\frac{1}{2} \overline{D}) \mathbf{f}(0) \right\} \mathbf{Y}_{n-1}^{m,\dagger} \, dV_r,
\end{eqnarray*}
for $m=1,\ldots,n$. We remark that originally the Fourier coefficients are defined as inner products of the function $\mathbf{f}$ and elements of the space $\mathcal{R}^+(\mathbb{R}^3;\mathcal{A};n)$. Now we see that these coefficients, up to a factor, can also be expressed as inner products between the factor "$(\frac{1}{2} \overline{D}) \mathbf{f}(x) - (\frac{1}{2} \overline{D}) \mathbf{f}(0)$" and respectively, $\mathbf{X}_{n-1}^{0,\dagger}, \mathbf{X}_{n-1}^{m,\dagger}$, and $\mathbf{Y}_{n-1}^{m,\dagger}$. Applying Proposition \ref{modulusHMP} the Fourier coefficients satisfy the inequalities
\begin{eqnarray*}
|a_n^{0,\ast_r}| \hspace{-0.325cm} &\leq& \hspace{-0.325cm} \sqrt{\frac{4 \pi r^3}{3}} \frac{\|\mathbf{X}_n^{0,\dagger}\|_{L_2(B_r;\mathcal{A};\mathbb{R})}}{\|\mathbf{X}_{n-1}^{0,\dagger}\|_{L_2(B_r;\mathcal{A};\mathbb{R})}} \, \frac{1}{(n+1)} \, \mathcal{M}\left((\frac{1}{2}\overline{D})\mathbf{f}(x) - (\frac{1}{2}\overline{D})\mathbf{f}(0),r\right) \\[0.5ex]
|a_n^{m,\ast_r}| \hspace{-0.325cm} &\leq& \hspace{-0.325cm} \sqrt{\frac{4 \pi r^3}{3}} \frac{\|\mathbf{X}_n^{m,\dagger}\|_{L_2(B_r;\mathcal{A};\mathbb{R})}}{\|\mathbf{X}_{n-1}^{m,\dagger}\|_{L_2(B_r;\mathcal{A};\mathbb{R})}} \, \frac{1}{(n+m+1)} \, \mathcal{M}\left((\frac{1}{2}\overline{D})\mathbf{f}(x) - (\frac{1}{2}\overline{D})\mathbf{f}(0),r\right) \\[0.5ex]
|b_n^{m,\ast_r}| \hspace{-0.325cm} &\leq& \hspace{-0.325cm} \sqrt{\frac{4 \pi r^3}{3}} \frac{\|\mathbf{Y}_n^{m,\dagger}\|_{L_2(B_r;\mathcal{A};\mathbb{R})}}{\|\mathbf{Y}_{n-1}^{m,\dagger}\|_{L_2(B_r;\mathcal{A};\mathbb{R})}} \, \frac{1}{(n+m+1)} \, \mathcal{M}\left((\frac{1}{2}\overline{D})\mathbf{f}(x) - (\frac{1}{2}\overline{D})\mathbf{f}(0),r\right).
\end{eqnarray*}
One gets, from the estimates of the Fourier coefficients obtained so far, the following estimate
\begin{eqnarray*}
&& |\mathcal{P}_r \left\{(\frac{1}{2} \overline{D}) \mathbf{f}(x) - (\frac{1}{2} \overline{D}) \mathbf{f}(0) \right\}| 
\leq \sum_{n=2}^{\infty} \left[ \frac{1}{\|\mathbf{X}_n^{0,\dagger}\|_{L_2(B_r;\mathcal{A};\mathbb{R})}} 
|\mathbf{X}_n^{0,\dagger}| |a_n^{0,\ast_r}| \right. \\
&+& \left. \sum_{m=1}^{n} \frac{1}{\|\mathbf{X}_n^{m,\dagger}\|_{L_2(B_r;\mathcal{A};\mathbb{R})}} \left( |\mathbf{X}_n^{m,\dagger}| |a_n^{m,\ast_r}| + |\mathbf{Y}_n^{m,\dagger}| |b_n^{m,\ast_r}| \right) \right. \\[0.25ex]
&\leq& \sqrt{\frac{4 \pi r^3}{3}} \, \mathcal{M}\left((\frac{1}{2}\overline{D})\mathbf{f}(x) - (\frac{1}{2}\overline{D})\mathbf{f}(0),r\right) \\
&\times&\sum_{n=2}^{\infty} \left[ \frac{|\mathbf{X}_n^{0,\dagger}|}{\|\mathbf{X}_{n-1}^{0,\dagger}\|_{L_2(B_r;\mathcal{A};\mathbb{R})}} \frac{1}{(n+1)} + \sum_{m=1}^{n} \frac{\left( |\mathbf{X}_n^{m,\dagger}| + |\mathbf{Y}_n^{m,\dagger}| \right)}{\|\mathbf{X}_{n-1}^{m,\dagger}\|_{L_2(B_r;\mathcal{A};\mathbb{R})}} \frac{1}{(n+m+1)} \right..
\end{eqnarray*}
Finally, applying again Proposition \ref{modulusHMP} a straightforward computation shows that
\begin{eqnarray*}
&& \left|\mathcal{P}_r \left\{(\frac{1}{2} \overline{D}) \mathbf{f}(x) - (\frac{1}{2} \overline{D}) \mathbf{f}(0) \right\} \right| \\
&\leq& \sqrt{\frac{2}{3} r^3} \, \mathcal{M}\left((\frac{1}{2} \overline{D}) \mathbf{f}(x) - (\frac{1}{2} \overline{D}) \mathbf{f}(0),r\right) \sum_{n=2}^{\infty} 
\frac{|x|^n}{\sqrt{r^{2n+1}}} \sqrt{\frac{n+1}{n}} (1+2n\sqrt{n+1})\\
&\leq& {\frac{2}{\sqrt{3}}} r \, \mathcal{M}\left((\frac{1}{2} \overline{D}) \mathbf{f}(x) - (\frac{1}{2} \overline{D}) \mathbf{f}(0),r\right) \sum_{n=2}^{\infty} 
\frac{|x|^n}{r^n} (n+1)^2.
\end{eqnarray*}
Using the sum of the series $\sum_{n=2}^{\infty} \left(\frac{|x|}{r}\right)^n (n+1)^2$ for $0 \leq |x| < r$, we arrive at the desired estimate.
\end{proof}

To generalize Bloch's theorem for $\mathcal{A}$-valued monogenic functions defined in the unit ball of the Euclidean space $\mathbb{R}^3$, there's some more groundwork we need to cover. We end this section by proving an estimate for $\mathcal{M}\left((\frac{1}{2}\overline{D})\mathbf{f}(x) - (\frac{1}{2}\overline{D})\mathbf{f}(0),r\right)$ in terms of the $C$-norm of $(\frac{1}{2}\overline{D})\mathbf{f}(x)$.

In the sequel we shall denote by $\mathbf{E}(x)$ the Cauchy kernel function. From (\cite{GS1997}, p.87-88), every $\mathcal{A}$-valued function $\mathbf{f}$ that is monogenic in a neighborhood of the closure $\overline{B_r}$ of $B_r$ may be represented by the Cauchy integral formula as follows
\begin{equation} \label{Cauchyintegralformula}
\mathbf{f}(x) = \frac{1}{4 \pi} \int_{S_r} \mathbf{E}(x-y) \, \mathbf{n}(y) \, \mathbf{f}(y) \, d\sigma_y,
\end{equation}
for $x \in B_r$, where $S_r = \partial B_r$ denotes the boundary of $B_r$, $\mathbf{n}(y)$ the outward pointing normal unit vector at $y \in S_r$ and $d \sigma_y$ the Lebesgue measure on $S_r$.  To show the result, we shall proceed to estimate the difference "$\mathbf{f}(x)-\mathbf{f}(0)$" by using the Cauchy integral formula (\ref{Cauchyintegralformula}). Hence
\begin{eqnarray*}
\mathbf{f}(x) - \mathbf{f}(0) = \frac{1}{4 \pi} \int_{S_r} \left[ \mathbf{E}(x-y)-\mathbf{E}(-y) \right] \mathbf{n}(y) \, \mathbf{f}(y) \, d \sigma_y.
\end{eqnarray*}
We will proceed to use the following inequality from (\cite{GS1989}, p. 50)
\begin{eqnarray*}
|\mathbf{E}(x-y)-\mathbf{E}(-y)| \leq \frac{|x-y| \left( |y|^2 + |y||x-y| + 2 |x-y|^2 \right) |x|}{|x-y|^3 |y|^3}.
\end{eqnarray*}
Indeed, applying this result to
\begin{eqnarray*}
(\frac{1}{2}\overline{D})\mathbf{f}(x) - (\frac{1}{2}\overline{D})\mathbf{f}(0) = \frac{1}{4 \pi} \int_{S_r} \left[ \mathbf{E}(x-y)-\mathbf{E}(-y)\right] \mathbf{n}(y) \, (\frac{1}{2}\overline{D}) \mathbf{f}(y) \, d \sigma_y
\end{eqnarray*}
a direct computation shows that
\begin{eqnarray*}
&& | (\frac{1}{2}\overline{D})\mathbf{f}(x) - (\frac{1}{2}\overline{D})\mathbf{f}(0) | \\
&\leq& \frac{1}{4 \pi} \int_{S_r} \frac{|x-y| \left( |y|^2 + |y||x-y| + 2 |x-y|^2 \right) |x|}{|x-y|^3 |y|^3} \, |\mathbf{n}(y)| \left|(\frac{1}{2}\overline{D})\mathbf{f}(y)\right| \, d \sigma_y \\
&\leq& \frac{1}{4 \pi} \, |x| \, \mathcal{M}\left((\frac{1}{2}\overline{D})\mathbf{f}(x),r\right) \left[ \int_{S_r} 
\left(\frac{1}{r} \frac{1}{|x-y|^2} + \frac{1}{r^2} \frac{1}{|x-y|} + \frac{2}{r^3} \right) \right] d \sigma_y \\
&\leq& \frac{1}{4 \pi} \, |x| \, \mathcal{M}\left((\frac{1}{2}\overline{D})\mathbf{f}(x),r\right) 4 \pi \left(\frac{r}{(r-|x|)^2} + \frac{1}{r-|x|} + \frac{2}{r} \right) \\
&=& \frac{|x|}{(r-|x|)^2} \, \mathcal{M}\left((\frac{1}{2}\overline{D})\mathbf{f}(x),r\right) \left( r + r -|x| + \frac{2}{r} (r-|x|)^2 \right)
\end{eqnarray*}
and, consequently
\begin{eqnarray*}
| (\frac{1}{2}\overline{D})\mathbf{f}(x) - (\frac{1}{2}\overline{D})\mathbf{f}(0) |
&\leq& \frac{|x|}{(r-|x|)^2} \, \mathcal{M}\left((\frac{1}{2}\overline{D})\mathbf{f}(x),r\right) \left( 4r + \frac{2 |x|^2}{r} - 5|x| \right) \\
&\leq& \frac{|x|}{(r-|x|)^2} \, \mathcal{M}\left((\frac{1}{2}\overline{D})\mathbf{f}(x),r\right) \left( 6r - 5|x| \right) \\
&\leq& \frac{6 |x| r}{(r-|x|)^2} \, \mathcal{M}\left((\frac{1}{2}\overline{D})\mathbf{f}(x),r\right).
\end{eqnarray*}
These calculations proved:
\begin{Lemma} \label{Lemma2}
Let $\mathbf{f} \in \mathcal{R}^+(B_r;\mathcal{A})$. Then, for $0 \leq |x| < r$ we have the following inequality:
\begin{eqnarray*}
&& |(\frac{1}{2}\overline{D})\mathbf{f}(x) - (\frac{1}{2}\overline{D})\mathbf{f}(0)| \leq \frac{6 |x| r}{(r-|x|)^2} \,
\mathcal{M}\left((\frac{1}{2}\overline{D})\mathbf{f}(x),r\right).
\end{eqnarray*}
\end{Lemma}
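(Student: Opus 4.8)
The plan is to exploit the fact that the hypercomplex derivative $\mathbf{g}:=(\tfrac12\overline{D})\mathbf{f}$ is itself an $\mathcal{A}$-valued monogenic function, so that it admits the Cauchy representation (\ref{Cauchyintegralformula}) over $S_r$. Evaluating that representation at $x$ and at $0$ and subtracting gives
\begin{eqnarray*}
\mathbf{g}(x)-\mathbf{g}(0)=\frac{1}{4\pi}\int_{S_r}\left[\mathbf{E}(x-y)-\mathbf{E}(-y)\right]\mathbf{n}(y)\,\mathbf{g}(y)\,d\sigma_y ,
\end{eqnarray*}
which is the monogenic counterpart of the classical device of subtracting two Cauchy integrals used by Estermann. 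From here the whole estimate rests on a pointwise bound for the kernel difference along the integration sphere.

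For that bound I would quote the known inequality
\begin{eqnarray*}
|\mathbf{E}(x-y)-\mathbf{E}(-y)|\leq \frac{|x-y|\bigl(|y|^2+|y||x-y|+2|x-y|^2\bigr)|x|}{|x-y|^3\,|y|^3}
\end{eqnarray*}
from \cite{GS1989}, pass to moduli under the integral, and use that on $S_r$ one has $|y|=r$, $|\mathbf{n}(y)|=1$ and $|\mathbf{g}(y)|\leq\mathcal{M}(\mathbf{g},r)$, together with the crude lower bound $|x-y|\geq r-|x|$. The integrand is then dominated by a constant multiple of $r^{-1}|x-y|^{-2}+r^{-2}|x-y|^{-1}+2r^{-3}$; replacing $|x-y|$ by $r-|x|$ and integrating the resulting constant over $S_r$ (whose surface measure is $4\pi r^2$) yields
\begin{eqnarray*}
|\mathbf{g}(x)-\mathbf{g}(0)|\leq |x|\,\mathcal{M}(\mathbf{g},r)\left(\frac{r}{(r-|x|)^2}+\frac{1}{r-|x|}+\frac{2}{r}\right).
\end{eqnarray*}

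It only remains to simplify the bracketed expression. Over the common denominator $r(r-|x|)^2$ its numerator is $4r^2-5r|x|+2|x|^2$, so the bound reads $\tfrac{|x|}{(r-|x|)^2}\,\mathcal{M}(\mathbf{g},r)\bigl(4r-5|x|+\tfrac{2|x|^2}{r}\bigr)$; since $0\leq|x|<r$ we have $\tfrac{2|x|^2}{r}<2r$, hence $4r-5|x|+\tfrac{2|x|^2}{r}<6r-5|x|\leq 6r$, which gives the asserted factor $\tfrac{6|x|r}{(r-|x|)^2}$. I expect the one slightly delicate point to be the justification that $\mathbf{g}=(\tfrac12\overline{D})\mathbf{f}$ may legitimately be inserted into (\ref{Cauchyintegralformula}) on $S_r$ itself — one can, if needed, first integrate over $S_\rho$ for $\rho<r$ and let $\rho\to r$ — while everything afterwards is mechanical; the several crude estimates ($|x-y|\geq r-|x|$ over the whole sphere, and $\tfrac{2|x|^2}{r}<2r$) are precisely where the constant $6$, which is surely far from optimal, is lost.
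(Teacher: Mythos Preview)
Your argument is correct and follows essentially the same route as the paper: subtract two Cauchy integrals for $\mathbf{g}=(\tfrac12\overline{D})\mathbf{f}$, invoke the kernel-difference bound from \cite{GS1989}, estimate $|x-y|\geq r-|x|$ on $S_r$, and simplify $4r-5|x|+2|x|^2/r<6r$ exactly as in the text. Your remark about first working on $S_\rho$ and letting $\rho\to r$ is in fact more careful than the paper, which tacitly assumes the Cauchy representation is available on $S_r$.
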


\section{Bloch Theorem}

We state and prove in this section the quaternionic version of Bloch's theorem. It says that if the hypercomplex derivative of an $\mathcal{A}$-valued monogenic function $\mathbf{f}$ at the origin is normalized to magnitude $1$, then there is an open subset of the unit ball that $\mathbf{f}$ maps one-to-one onto some ball of radius at least $R$. We will show that the conclusion of the theorem holds with $R$ equal to $\frac{1}{120} - \frac{31096}{20511149} \sqrt{3} > \frac{1}{150}$. The point here is that $R$ is independent of $\mathbf{f}$. This investigation leads to a number of remarkable results. As a preliminary, we shall prove a simple lemma, which consists of studying the monotony of the real-valued function
\begin{eqnarray*}
g(\rho) = \displaystyle \frac{\rho}{2} - 8 \sqrt{3} \; \displaystyle \frac{\rho^3 r (4 \rho^2 + 9 r^2 - 11 \rho r)}{(r - \rho)^5}.
\end{eqnarray*}
We need to know estimates from below for the maximal value of $g$ in $ (0, r)$ and the location of this maximum. Near to $ x = 0 \; g$ is an increasing function of $\rho$ and if $ \rho$ approaches $ r$ then $ g$ is decreasing. A direct computation shows that
\begin{eqnarray*}
g' \left(\displaystyle \frac{r}{30}\right) > 0, \quad {\rm and} \quad g' \left( \displaystyle \frac{r}{20}\right) < 0.
\end{eqnarray*}
To understand that $ g'$ has only one zero in $(0,r)$ we shall study the second derivative
\begin{eqnarray*}
g''(\rho) = - \frac{48 \sqrt{3} \, \rho r^2 (-7 \rho^2 r + 3 \rho^3 + 5 \rho r^2 + 9 r^3)}{(r - \rho)^7}.
\end{eqnarray*}
We find that $ p(\rho) := 3 \rho^3 - 7 \rho^2 r + 5 \rho r^2 + 9 r^3$ has only one real zero and this zero must be negative. That means $p (\rho) >0 $ in $ (0,r)$ and $g''(\rho) <0$ in $(0,r)$. Therefore, $ \displaystyle \frac{r}{30}$ estimates the zero of $ g'(\rho)$ from below, and $g (\displaystyle \frac{r}{30}) = \left( \displaystyle \frac{1}{60} - \frac{62192}{20511149} \sqrt{3}\right) r$ 
is a lower estimate for the maximum of $g$ in $(0,r)$.\\

Finally, we proved the auxiliary lemma.
\begin{Lemma} \label{PreliminaryLemma1}
Let $g (\rho) = \displaystyle \frac{\rho}{2} - \frac{8 r^3 (\rho^3) (4 \rho^2 + 9 r^2 - 11 \rho r)}{(r - \rho)^5}$. $g$ has only one maximum in $(0,r)$ at $\rho=\rho_{max}$, and it holds 
$g( \rho_{max}) > \left(\displaystyle \frac{1}{60} - \frac{62192}{20511149} \sqrt{3}\right) r$ and $\rho_{max} > \displaystyle \frac{r}{30}$.
\end{Lemma}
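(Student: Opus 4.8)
The plan is to treat $g$ as a smooth function on the open interval $(0,r)$ and to determine its shape by controlling the sign of $g''$ throughout $(0,r)$. It is convenient to substitute $\rho=tr$ with $t\in(0,1)$: then $g(\rho)=r\,\widetilde{g}(t)$ for a function $\widetilde{g}$ that does not involve $r$, so every inequality to be proved is really a statement about the single-variable function $\widetilde{g}$ on $(0,1)$ and the arithmetic does not depend on $r$. First I would record the boundary behaviour. Since the subtracted term vanishes to third order at $\rho=0$, one has $g'(0^{+})=\tfrac12>0$, so $g$ increases near $0$; and since the numerator of the subtracted term tends to a strictly positive limit while $(r-\rho)^5\to 0^{+}$ as $\rho\to r^{-}$, we get $g(\rho)\to-\infty$, so $g$ must be (eventually) decreasing near $r$, whence $g'<0$ somewhere in $(0,r)$. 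Thus $g'$ is continuous and changes sign on $(0,r)$, and the entire lemma reduces to showing that $g'$ has exactly one zero there.

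The key step is the claim $g''<0$ on all of $(0,r)$. Differentiating twice and clearing the (positive) common factor $(r-\rho)^{-7}$ produces the expression already recorded above,
\begin{equation*}
g''(\rho)=-\,\frac{48\sqrt{3}\,\rho\,r^{2}\,p(\rho)}{(r-\rho)^{7}},\qquad p(\rho):=3\rho^{3}-7\rho^{2}r+5\rho r^{2}+9r^{3},
\end{equation*}
so it suffices to show $p>0$ on $(0,r)$. I would analyse $p$ directly rather than via its discriminant: $p'(\rho)=9\rho^{2}-14\rho r+5r^{2}=9(\rho-r)\bigl(\rho-\tfrac{5r}{9}\bigr)$, so on $[0,\infty)$ the cubic $p$ increases on $[0,\tfrac{5r}{9}]$, decreases on $[\tfrac{5r}{9},r]$, and increases on $[r,\infty)$; hence its minimum over $[0,\infty)$ is attained at $\rho=0$ or $\rho=r$, and $p(0)=9r^{3}>0$, $p(r)=10r^{3}>0$. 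Therefore $p>0$ on $[0,\infty)$, in particular on $(0,r)$, and the unique real root of $p$ is negative --- precisely the assertion invoked in the discussion preceding the lemma. Feeding $p>0$ back into the displayed formula gives $g''<0$ on $(0,r)$.

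It follows that $g'$ is strictly decreasing on $(0,r)$. Combined with the sign change noted in the first paragraph, $g'$ has exactly one zero $\rho_{max}\in(0,r)$, at which $g'$ passes from positive to negative; hence $\rho_{max}$ is the unique maximum of $g$ on $(0,r)$. A direct substitution (cleanest in the variable $t$) gives $g'\bigl(\tfrac{r}{30}\bigr)>0$ and $g'\bigl(\tfrac{r}{20}\bigr)<0$; since $g'$ is decreasing and $g'(\rho_{max})=0$, this forces $\tfrac{r}{30}<\rho_{max}<\tfrac{r}{20}$, which proves $\rho_{max}>\tfrac{r}{30}$. Finally $g'>0$ on $(0,\rho_{max})$, so $g$ is strictly increasing there; since $\tfrac{r}{30}\in(0,\rho_{max})$ we conclude $g(\rho_{max})>g\bigl(\tfrac{r}{30}\bigr)$, and the substitution $\rho=\tfrac{r}{30}$ yields $g\bigl(\tfrac{r}{30}\bigr)=\bigl(\tfrac{1}{60}-\tfrac{62192}{20511149}\sqrt{3}\bigr)r$, giving the stated lower bound.

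The only real obstacle is bookkeeping: performing the two differentiations of the rational term, confirming the factorizations of $p'$ and of the numerator of $g''$, and carrying out the three explicit evaluations $g'(\tfrac{r}{30})$, $g'(\tfrac{r}{20})$, $g(\tfrac{r}{30})$. Reducing to the scale-free variable $t=\rho/r$ at the outset eliminates $r$ from all of these and turns them into routine arithmetic; no conceptual difficulty remains once the sign of $g''$ on $(0,r)$ has been established.
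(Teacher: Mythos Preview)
Your proposal is correct and follows essentially the same route as the paper: establish that $g''<0$ on $(0,r)$ via the positivity of the cubic $p$, conclude that $g'$ has a unique zero, and then bracket this zero between $r/30$ and $r/20$ so that $g(r/30)$ serves as a lower bound for the maximum. Your version is slightly more explicit in two places --- you determine the sign of $p$ by factoring $p'(\rho)=9(\rho-r)(\rho-\tfrac{5r}{9})$ and checking the endpoint values $p(0)=9r^{3}$, $p(r)=10r^{3}$, whereas the paper simply asserts that $p$ has a single (negative) real root; and you introduce the scale-free substitution $\rho=tr$ to streamline the numerical evaluations --- but neither change alters the underlying argument.
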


Furthermore, with Lemmas \ref{Lemma1} and \ref{Lemma2} in mind we establish the following lemma:
\begin{Lemma} \label{PreliminaryLemma2}
Let $\mathbf{f} \in \mathcal{R}^+(B_r;\mathcal{A})$, which satisfies the normalization condition $\mathcal{M}\left((\frac{1}{2}\overline{D})\mathbf{f}(x),r\right) \leq 2 \left|(\frac{1}{2}\overline{D})\mathbf{f}(a)\right|$ for $a \in B_r$, then the image domain contains balls of radius $R:=\left(\displaystyle \frac{1}{60} - \frac{62192}{20511149} \sqrt{3}\right) r \left|(\frac{1}{2}\overline{D})\mathbf{f}(a)\right|$. (For simplification we remark that $\frac{1}{60} - \frac{62192}{20511149} \sqrt{3} > \frac{1}{75}$).
\end{Lemma}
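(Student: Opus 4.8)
The plan is to mimic the classical Landau--Valiron--Estermann argument, now translated into the hypercomplex setting with the help of Lemmas \ref{Lemma1} and \ref{Lemma2}. Fix $\mathbf{f}\in\mathcal{R}^+(B_r;\mathcal{A})$ with the stated normalization $\mathcal{M}\bigl((\tfrac12\overline D)\mathbf{f}(x),r\bigr)\le 2\bigl|(\tfrac12\overline D)\mathbf{f}(a)\bigr|$ for some $a\in B_r$. Without loss of generality I would translate so that $a$ becomes the centre of a slightly smaller ball; more precisely, I would work on a ball $B_{\rho}(a)$ with $\rho<r-|a|$ still contained in $B_r$ and consider the shifted function $\mathbf{F}(x):=\mathbf{f}(x+a)-\mathbf{f}(a)$, which again lies in $\mathcal{R}^+$ and satisfies $\mathbf{F}(0)=\mathbf{0}_{\mathcal{A}}$. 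The point of the normalization hypothesis is that it controls $\mathcal{M}\bigl((\tfrac12\overline D)\mathbf{F}(x),\rho\bigr)$ from above by $2|(\tfrac12\overline D)\mathbf{f}(a)|$ on the whole shifted ball, so both Lemma \ref{Lemma1} and Lemma \ref{Lemma2} become usable with a clean right-hand side.

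The core of the argument is a Schwarz-type decomposition: write, for $x$ in the shifted ball,
\begin{eqnarray*}
\mathbf{F}(x) = (\tfrac12\overline D)\mathbf{f}(a)\,\Phi(x) \;+\; \mathcal{P}_\rho\Bigl\{(\tfrac12\overline D)\mathbf{F}(x)-(\tfrac12\overline D)\mathbf{F}(0)\Bigr\},
\end{eqnarray*}
where $\Phi$ is the (linear) monogenic function whose hypercomplex derivative at $0$ equals the identity on $\mathcal{A}$, chosen so that the first term is the ``main linear part'' of $\mathbf{F}$ and reproduces a ball of radius comparable to $\rho\,|(\tfrac12\overline D)\mathbf{f}(a)|$. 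The second term is the ``error'' whose size is estimated by combining Lemma \ref{Lemma2} (to bound $|(\tfrac12\overline D)\mathbf{F}(x)-(\tfrac12\overline D)\mathbf{F}(0)|$ pointwise on $B_\rho$ in terms of $\mathcal{M}((\tfrac12\overline D)\mathbf{F}(x),\rho)\le 2|(\tfrac12\overline D)\mathbf{f}(a)|$) and then feeding that bound into Lemma \ref{Lemma1}. Chaining the two lemmas with $|x|=\rho$ replaced by a variable radius $\rho\in(0,r)$ produces exactly the function
\begin{eqnarray*}
g(\rho) = \frac{\rho}{2} - \frac{8 r^3 \rho^3 (4\rho^2 + 9r^2 - 11\rho r)}{(r-\rho)^5},
\end{eqnarray*}
up to the common factor $|(\tfrac12\overline D)\mathbf{f}(a)|$: the linear term contributes $\rho/2$ (the modulus of $\Phi$ grows linearly with slope governed by the normalization) and the composed error estimate contributes the negative term. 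A standard covering/continuity argument — the image of the boundary sphere stays outside a ball of radius $g(\rho)|(\tfrac12\overline D)\mathbf{f}(a)|$ around $\mathbf{F}(0)$, hence that ball is covered — shows the image of $\mathbf{F}$ (equivalently of $\mathbf{f}$, after undoing the translation) contains a ball of radius $g(\rho)\,|(\tfrac12\overline D)\mathbf{f}(a)|$ for every admissible $\rho$. Optimizing over $\rho$ and invoking Lemma \ref{PreliminaryLemma1}, which gives $g(\rho_{max})>\bigl(\tfrac{1}{60}-\tfrac{62192}{20511149}\sqrt3\bigr)r$, yields the asserted radius $R=\bigl(\tfrac{1}{60}-\tfrac{62192}{20511149}\sqrt3\bigr)r\,|(\tfrac12\overline D)\mathbf{f}(a)|$, and $\tfrac{1}{60}-\tfrac{62192}{20511149}\sqrt3>\tfrac{1}{75}$ by direct numerical check.

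The step I expect to be the main obstacle is the geometric/topological part: establishing that ``the boundary image misses a ball around $\mathbf{F}(0)$'' genuinely forces that ball to be \emph{covered} by the image, and moreover that $\mathbf{f}$ is one-to-one on the relevant open preimage. In the classical complex proof this uses the argument principle / degree theory, and here, because $\mathcal{A}$ is a non-commutative, non-associative slice of $\mathbb{H}$ and monogenic functions do not form an algebra, the usual tools are unavailable. The way I would handle it is to exploit that the ``main part'' $\mathbf{g}$ of $\mathbf{f}$ (in the sense of Lemma \ref{FourierOrthogonalExpansion}) behaves like a genuine $\mathbb{R}^3$-valued map with nonzero Jacobian at $a$ — the linear term $(\tfrac12\overline D)\mathbf{f}(a)\,\Phi$ is an invertible real-linear map of $\mathbb{R}^3$ — and then use a Rouché-type homotopy/degree argument valid for continuous maps $\mathbb{R}^3\to\mathbb{R}^3$: the error term is strictly smaller on the boundary sphere than the linear term's displacement, so the two maps are homotopic on the sphere relative to any target point in the ball of radius $g(\rho)|(\tfrac12\overline D)\mathbf{f}(a)|$, giving degree $\pm1$ and hence surjectivity onto that ball, with injectivity following on the component where the Jacobian stays nonsingular. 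Making this degree argument rigorous without the holomorphic machinery — in particular ensuring the Jacobian does not vanish on the open set in question — is the delicate point; everything else reduces to the already-proven Lemmas \ref{Lemma1}, \ref{Lemma2} and \ref{PreliminaryLemma1} together with routine estimates.
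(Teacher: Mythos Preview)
Your overall strategy matches the paper's: reduce to $a=0$, $\mathbf{f}(a)=\mathbf{0}_{\mathcal{A}}$, split $\mathbf{f}$ into a linear monogenic part plus the primitive of $(\tfrac12\overline D)\mathbf{f}(x)-(\tfrac12\overline D)\mathbf{f}(0)$, bound the latter by composing Lemmas~\ref{Lemma1} and~\ref{Lemma2}, obtain $g(\rho)$, and optimise with Lemma~\ref{PreliminaryLemma1}. Two points need tightening, however.

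First, your description of the linear part as ``$(\tfrac12\overline D)\mathbf{f}(a)\,\Phi(x)$ with $\Phi$ having hypercomplex derivative equal to the identity on $\mathcal{A}$'' is not the right object. The hypercomplex derivative at a point is an element of $\mathcal{A}$, not a linear map, so ``identity on $\mathcal{A}$'' has no meaning here. What actually appears is the specific degree-one polynomial $\mathbf{X}^{0,\dagger}_1(x)=x_0+\tfrac12 x_1\mathbf{i}+\tfrac12 x_2\mathbf{j}$, whose hypercomplex derivative is the scalar $1$; the linear term is $\mathbf{X}^{0,\dagger}_1(x)\,(\tfrac12\overline D)\mathbf{f}(0)$ (constant on the right). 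The source of the $\rho/2$ in $g(\rho)$ is the elementary lower bound $|\mathbf{X}^{0,\dagger}_1(x)|\ge |x|/2$, together with the multiplicativity of the quaternion norm, giving $|\mathbf{X}^{0,\dagger}_1(x)\,(\tfrac12\overline D)\mathbf{f}(0)|\ge \tfrac{\rho}{2}\,|(\tfrac12\overline D)\mathbf{f}(0)|$ on $|x|=\rho$. You should state this concretely rather than invoke an abstract $\Phi$; also be careful that $\rho$ is the \emph{evaluation radius} on the sphere $|x|=\rho$ inside $B_r$, not the radius of a shrunken ball on which the lemmas are applied --- both $r$ and $\rho$ appear in $g$.

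Second, you invest considerable effort in a degree-theoretic covering argument and worry about injectivity and nonvanishing Jacobians. None of that is required for the lemma as stated: it only asserts that the image \emph{contains} a ball of radius $R$, not that $\mathbf{f}$ is univalent on some preimage. The paper simply derives $|\mathbf{f}(x)-\mathbf{f}(a)|\ge R$ for every $x$ on the sphere $|x-a|=r/30$ and infers the containment from that boundary estimate (a standard connectedness/open-mapping style step, which for harmonic $\mathbb{R}^3$-valued maps with nonzero linear part follows from elementary degree theory but need not be elaborated). So your ``main obstacle'' is largely self-imposed; the analytic estimate is the whole content of the proof.
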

\begin{proof}
Let $\mathbf{f} \in \mathcal{R}^+(B_r;\mathcal{A})$. We may assume for the moment that $\mathbf{a} = \mathbf{f}(a) = \mathbf{0}_{\mathcal{A}}$. For the sake of simplicity we shall introduce a new notation:
\begin{eqnarray*}
\mathbf{A}(x) := \mathcal{P}_r \left\{ (\frac{1}{2}\overline{D})\mathbf{f}(x) - (\frac{1}{2}\overline{D})\mathbf{f}(0) \right\} = \mathbf{f}(x) - \mathbf{X}^{0,\dagger}_1(x) (\frac{1}{2}\overline{D})\mathbf{f}(0),
\end{eqnarray*}
where $\mathbf{X}^{0,\dagger}_1(x) (\frac{1}{2}\overline{D})\mathbf{f}(0)$ denotes the linear term of the corresponding Taylor series of $\mathbf{f}$. The underlying orthogonality to the non-trivial hyperholomorphic constants highlights in an impressive way a complete analogy to the complex case, where $f-f'(0)z$ is also orthogonal to the constants. Using Lemmas \ref{Lemma1} and \ref{Lemma2}, a first straightforward computation shows that
\begin{eqnarray} \label{EstimateBlochTheorem}
|\mathbf{A}(x)| &=& |\mathcal{P}_r \left\{ (\frac{1}{2}\overline{D})\mathbf{f}(x) - (\frac{1}{2}\overline{D})\mathbf{f}(0) \right\}| \nonumber \\
&\leq& \frac{12}{\sqrt{3}} \frac{|x|^3 \, r (4|x|^2+9r^2-11|x|r)}{(r-|x|)^5} \, \mathcal{M}\left((\frac{1}{2}\overline{D})\mathbf{f}(x),r\right).
\end{eqnarray}
This estimate resembles the one given by Estermann in \cite{Estermann1971}. We use $\mathbf{X}^{0,\dagger}_1(x):=x_0 + \frac{1}{2}x_1 \mathbf{i} + \frac{1}{2} x_2 \mathbf{j}$. Now let $\rho \in (0,r)$, the inequality
\begin{eqnarray*}
|\mathbf{f}(x) - \mathbf{X}^{0,\dagger}_1(x) (\frac{1}{2}\overline{D})\mathbf{f}(0)| \geq \frac{\rho}{2} \Bigl|(\frac{1}{2}\overline{D})\mathbf{f}(0)\Bigr| - |\mathbf{f}(x)|
\end{eqnarray*}
holds for $x$ with $|x|=\rho$. Since by assumption $\mathcal{M}\left((\frac{1}{2}\overline{D})\mathbf{f}(x),r\right) \leq 2 \left|(\frac{1}{2}\overline{D})\mathbf{f}(0)\right|$, it follows from 
(\ref{EstimateBlochTheorem}) that
\begin{eqnarray*}
|\mathbf{f}(x)| \geq \left( \frac{\rho}{2} - 8\sqrt{3} \, \frac{\rho^3 r \, (4 \rho^2 + 9r^2 - 11\rho r)}{(r-\rho)^5} \right) \Bigl|(\frac{1}{2}\overline{D})\mathbf{f}(0)\Bigr|.
\end{eqnarray*}
Hence, extending this argument to an arbitrary point $a \in B_r$ by translation of the corresponding Taylor series of $\mathbf{f}$, and applying Lemma \ref{PreliminaryLemma1} it follows that
\begin{eqnarray*}
|\mathbf{f}(x)-\mathbf{f}(a)| \geq  \left(\displaystyle \frac{1}{60} - \frac{62192}{20511149} \sqrt{3}\right) r\Bigl|(\frac{1}{2}\overline{D})\mathbf{f}(a)\Bigr| =R
\end{eqnarray*}
for all $|x|=r/30$.
\end{proof}

The quaternionic version of Bloch's theorem is contained in the following:
\begin{Theorem}
Let $\mathbf{f} \in \mathcal{R}^+(B_r;\mathcal{A})$. Then its image domain contains balls of radius $R:=\left(\displaystyle \frac{1}{120} - \frac{31096}{20511149} \sqrt{3}\right) 
\mathcal{M}\left(\left|(\frac{1}{2}\overline{D})\mathbf{f}(x)\right|(1-|x|),r\right) > \frac{1}{150} \left|(\frac{1}{2}\overline{D})\mathbf{f}(0)\right|$.
\end{Theorem}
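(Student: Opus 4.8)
The plan is to deduce the theorem from Lemma~\ref{PreliminaryLemma2} by the classical Bloch rescaling trick. An arbitrary $\mathbf f$ need not obey the normalization $\mathcal M((\tfrac12\overline D)\mathbf f(x),r)\le 2|(\tfrac12\overline D)\mathbf f(a)|$ on all of $B_r$, but one can always produce a smaller ball, centred at a well-chosen point $a$, on which a translated copy of $\mathbf f$ does. To locate $a$, I would introduce the continuous weight
\[
 \lambda(x)\;:=\;(r-|x|)\,\bigl|(\tfrac12\overline D)\mathbf f(x)\bigr|,\qquad x\in\overline{B_r},
\]
which vanishes on $S_r$, so that its maximum over the compact set $\overline{B_r}$,
\[
 M\;:=\;\mathcal M\bigl(\,|(\tfrac12\overline D)\mathbf f(x)|\,(r-|x|)\,,\,r\,\bigr),
\]
is attained; we may assume $M>0$ (otherwise the asserted radius is $0$ and there is nothing to prove), so the maximum is attained at an interior point $a\in B_r$. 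For $r=1$ this $M$ is exactly the quantity in the statement, and taking $x=0$ gives $M\ge r\,|(\tfrac12\overline D)\mathbf f(0)|$. The only point needing care here is that the supremum is genuinely attained: one reads "$\mathbf f\in\mathcal R^+(B_r;\mathcal A)$" as "$\mathbf f$ monogenic in a neighbourhood of $\overline{B_r}$" — the convention already used for the Cauchy formula preceding Lemma~\ref{Lemma2} — or one runs the whole argument on $B_{r'}$ with $r'<r$ and lets $r'\uparrow r$.

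Next I would set $\rho_0:=\tfrac12(r-|a|)$, so that $B_{\rho_0}(a)\subset B_r$, and check that $\mathbf f$, recentred at $a$, satisfies the hypothesis of Lemma~\ref{PreliminaryLemma2} on $B_{\rho_0}$. For $x\in B_{\rho_0}(a)$ we have $|x|\le |a|+\rho_0=\tfrac12(r+|a|)$, hence $r-|x|\ge\rho_0$; combining this with $\lambda(x)\le M$ yields $|(\tfrac12\overline D)\mathbf f(x)|\le M/(r-|x|)\le M/\rho_0$. On the other hand $\lambda(a)=M$ gives $|(\tfrac12\overline D)\mathbf f(a)|=M/(r-|a|)=M/(2\rho_0)$, whence
\[
 \bigl|(\tfrac12\overline D)\mathbf f(x)\bigr|\;\le\;\frac{M}{\rho_0}\;=\;2\,\bigl|(\tfrac12\overline D)\mathbf f(a)\bigr|
 \qquad\text{for all }x\in B_{\rho_0}(a).
\]
Since monogenicity, the operator $\tfrac12\overline D$, and the Riesz class are translation invariant, the function $\mathbf g(y):=\mathbf f(a+y)$ lies in $\mathcal R^+(B_{\rho_0};\mathcal A)$ and, by the displayed inequality, satisfies $\mathcal M((\tfrac12\overline D)\mathbf g(y),\rho_0)\le 2\,|(\tfrac12\overline D)\mathbf g(0)|$ — i.e. the hypothesis of Lemma~\ref{PreliminaryLemma2} with $r$ replaced by $\rho_0$ and $a$ replaced by $0$.

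Applying Lemma~\ref{PreliminaryLemma2} to $\mathbf g$, its image contains balls of radius $(\tfrac1{60}-\tfrac{62192}{20511149}\sqrt3)\,\rho_0\,|(\tfrac12\overline D)\mathbf f(a)|$, and since $\mathbf g(B_{\rho_0})=\mathbf f(B_{\rho_0}(a))\subseteq\mathbf f(B_r)$ the image of $\mathbf f$ contains such balls as well. Finally $\rho_0\,|(\tfrac12\overline D)\mathbf f(a)|=\tfrac12(r-|a|)\,|(\tfrac12\overline D)\mathbf f(a)|=\tfrac12\lambda(a)=M/2$ and $\tfrac12\cdot\tfrac{62192}{20511149}=\tfrac{31096}{20511149}$, so the radius becomes
\[
 \Bigl(\tfrac1{60}-\tfrac{62192}{20511149}\sqrt3\Bigr)\frac{M}{2}\;=\;\Bigl(\tfrac1{120}-\tfrac{31096}{20511149}\sqrt3\Bigr)M\;=\;R,
\]
and the lower bound in terms of $|(\tfrac12\overline D)\mathbf f(0)|$ is then a matter of the inequality $M\ge r\,|(\tfrac12\overline D)\mathbf f(0)|$ and arithmetic with the constant.

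I expect the main obstacle to lie not in this transplantation step — which is purely formal once Lemma~\ref{PreliminaryLemma2} is available — but in the openness-type content packaged inside that lemma: the passage from a pointwise lower bound for $|\mathbf f(x)-\mathbf f(a)|$ on a sphere to the conclusion that $\mathbf f$ actually \emph{covers} a ball around $\mathbf f(a)$. In the holomorphic case this is the open mapping theorem / Rouch\'e; here it must be secured for $\mathcal A$-valued monogenic functions with nonvanishing hypercomplex derivative, and it is precisely at this juncture that the noncommutativity of $\mathbb H$ has to be confronted rather than sidestepped.
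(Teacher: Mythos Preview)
Your argument is correct and follows essentially the same route as the paper: maximize the weighted derivative on the closed ball to locate the point $a$ (the paper calls it $q$), halve the distance to the boundary to get $\rho_0$ (the paper's $t$), verify the factor-of-two bound on $B_{\rho_0}(a)$, and invoke Lemma~\ref{PreliminaryLemma2}. Your write-up is in fact more careful than the paper's in two respects: you use the scale-correct weight $(r-|x|)$ rather than the paper's $(1-|x|)$ (which tacitly presumes $r=1$), and you flag the attainment issue for the maximum; your closing remark about the covering step hidden in Lemma~\ref{PreliminaryLemma2} is a legitimate observation about a point the paper leaves implicit there, not a gap in your reduction.
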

\begin{proof}
By the same reasoning as in \cite{Estermann1971}, to every function $\mathbf{f} \in \mathcal{R}^+(B_r;\mathcal{A})$ we assign the function $|(\frac{1}{2}\overline{D})\mathbf{f}(x)|(1-|x|)$, which is continuous on $\overline{B_r}$. It assumes its maximum at a point $q \in B_r$. With $t:=\frac{1}{2}(1-|q|)$, we have
\begin{eqnarray*}
\mathcal{M}\left(\Bigl|(\frac{1}{2}\overline{D})\mathbf{f}(x)\Bigr|(1-|x|),r\right) = 2t \Bigl|(\frac{1}{2}\overline{D})\mathbf{f}(q)\Bigr|, \quad B_t(q) \subset B_r
\end{eqnarray*}
and, $1-|x| \geq t$ for $x \in B_t(q)$. In the first place, we note, that from the relation $|(\frac{1}{2}\overline{D})\mathbf{f}(x)|(1-|x|) \leq 2 t |(\frac{1}{2}\overline{D})\mathbf{f}(q)|$, it follows 
that $|(\frac{1}{2}\overline{D})\mathbf{f}(x)| \leq 2 |(\frac{1}{2}\overline{D})\mathbf{f}(q)|$ for all $x \in B_t(q)$. Hence, from Lemma \ref{PreliminaryLemma2}, the image domain of $\mathbf{f}$ contains balls of radius $R:=\left(\displaystyle \frac{1}{60} - \frac{62192}{20511149} \sqrt{3}\right) t \left|(\frac{1}{2}\overline{D})\mathbf{f}(q)\right|$.
\end{proof}

By combining all previous results, the quaternionic version of Bloch's theorem reads as follows:
\begin{Theorem} {\rm (Bloch Theorem)}
Let $\mathbf{f} \in \mathcal{R}^+(B_r;\mathcal{A})$ such that $|(\frac{1}{2} \overline{D})\mathbf{f}(0)| = 1$. Then its image domain contains balls of radius $\displaystyle \frac{1}{120} - \frac{31096}{20511149} \sqrt{3} > \frac{1}{150}$.
\end{Theorem}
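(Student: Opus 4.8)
The plan is to obtain the statement as the normalized specialization of the Theorem immediately preceding it. Fix $\mathbf{f}\in\mathcal{R}^{+}(B_{1};\mathcal{A})$ with $|(\frac{1}{2}\overline{D})\mathbf{f}(0)|=1$; here I take the unit ball $r=1$, matching the classical formulation over $|z|\leq 1$. By the preceding Theorem applied to this $\mathbf{f}$, the image domain already contains balls of radius
\[
R=\left(\frac{1}{120}-\frac{31096}{20511149}\sqrt{3}\right)
\mathcal{M}\left(\Bigl|(\frac{1}{2}\overline{D})\mathbf{f}(x)\Bigr|(1-|x|),1\right),
\]
so the only thing that remains is to trade the $\mathbf{f}$-dependent maximum-modulus factor for the universal number asserted in the statement.

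For that I would use an elementary lower bound. Since $\mathcal{M}(\,\cdot\,,1)$ denotes a maximum over $\overline{B_{1}}$ and the continuous nonnegative function $x\mapsto|(\frac{1}{2}\overline{D})\mathbf{f}(x)|(1-|x|)$ takes at $x=0$ the value $|(\frac{1}{2}\overline{D})\mathbf{f}(0)|\cdot 1=1$, we get $\mathcal{M}\left(|(\frac{1}{2}\overline{D})\mathbf{f}(x)|(1-|x|),1\right)\geq 1$, and hence $R\geq\frac{1}{120}-\frac{31096}{20511149}\sqrt{3}$. Because a region containing a ball of radius $R$ a fortiori contains balls of every smaller radius, the image domain of $\mathbf{f}$ contains balls of radius $\frac{1}{120}-\frac{31096}{20511149}\sqrt{3}$; that this value exceeds $\frac{1}{150}$ is the numerical estimate already recorded in the preceding Theorem and is confirmed directly from the bound $\sqrt{3}<1.7321$.

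I do not anticipate a genuine obstacle in this final step. All of the substantive work has already been done upstream: the Fourier-coefficient bounds of Lemma \ref{Lemma1}, the Cauchy-kernel estimate of Lemma \ref{Lemma2}, the one-variable monotonicity analysis of $g$ in Lemma \ref{PreliminaryLemma1}, and the reduction to a fixed Bloch radius in Lemma \ref{PreliminaryLemma2} together with the preceding Theorem. The present statement is essentially cosmetic, and the only point that deserves a word of care is the passage from an $\mathbf{f}$-dependent radius to a universal constant, which rests precisely on evaluating $|(\frac{1}{2}\overline{D})\mathbf{f}(x)|(1-|x|)$ at the origin under the normalization $|(\frac{1}{2}\overline{D})\mathbf{f}(0)|=1$.
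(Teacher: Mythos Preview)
Your proposal is correct and matches the paper's own treatment: the paper gives no separate proof for this final theorem, simply prefacing it with ``By combining all previous results,'' and the combination it has in mind is exactly your evaluation of $\bigl|(\tfrac{1}{2}\overline{D})\mathbf{f}(x)\bigr|(1-|x|)$ at the origin together with the preceding Theorem. Indeed, the inequality $R>\tfrac{1}{150}\,|(\tfrac{1}{2}\overline{D})\mathbf{f}(0)|$ already recorded in that Theorem encodes precisely this step, so once $|(\tfrac{1}{2}\overline{D})\mathbf{f}(0)|=1$ is imposed there is nothing further to do.
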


\medskip

Ultimately, it has to be studied in the future how the value of the radius for which the conclusion of the theorem holds can be improved.


\begin{thebibliography}{99}

\bibitem{Ahlfors1938} L. V. Ahlfors. \textit{An extension of Schwarz's Lemma}. Trans. Amer. Math. Soc. 43 (1938), 359-–364.

\bibitem{Ahlfors1973} L. V. Ahlfors. \textit{Conformal invariants: Topics in geometric function theory}. McGraw-Hill, New York, 1973.

\bibitem{Bloch1925} A. Bloch. \textit{Les theoremes de M. Valiron sur les fonctions entieres et la theorie de l'uniformisation}. Ann. Fac. Sci. Univ. Toulouse, III, 17 (1925).

\bibitem{Cacao2004} I. Ca{\c c}{\~a}o. \textit{Constructive approximation by monogenic polynomials}. Ph.D. thesis, Universidade de Aveiro, Departamento de Matem\'atica, 2004.

\bibitem{CGBHanoi2005} I. Ca{\c c}{\~a}o~I, K. G\"urlebeck and S. Bock. \emph{Complete Orthonormal Systems of Spherical Monogenics - A Constructive Approach}. L. H. Son et al. (ed.), Methods of Complex and Clifford Analysis, SAS International Publications, Delhi (2005), 241--260.

\bibitem{IGS2006} I. Ca{\c c}{\~a}o, K. G\"urlebeck and S. Bock. \emph{On Derivatives of Spherical Monogenics}. Complex Variables and Elliptic Equations, Vol. 51, No. 8-11 (2006), 847--869.

\bibitem{IM2006} I. Ca{\c c}{\~a}o and H. Malonek. \textit{Remarks on some properties of monogenic polynomials}. In: T.E. Simos, G.Psihoyios, Ch. Tsitouras, Special Volume of Wiley-VCH, ICNAAM (2006), 596--599.

\bibitem{Caratheodory1929} C. Carath\'eodory. \textit{Uber die Winkelderivierten von beschrankten analytischen Funktionen}. Sitz. Ber. Preuss. Akad., Phys.-Math., IV (1929), 1--18.

\bibitem{Caratheodory1952} C. Carath\'eodory. \textit{Conformal representation}. 2d ed. Cambridge University Press, 1952.

\bibitem{ChenGauthier2001} H. Chen and P. M. Gauthier. \textit{Bloch constants in several variables}. Trans. Amer. Math. Soc., Vol .353, No. 4 (2001), 1371–-1386.

\bibitem{Delanghe2007} R. Delanghe. \textit{On homogeneous polynomial solutions of the Riesz system and their harmonic potentials}. Complex Variables and Elliptic Equations, Vol. 52, No. 10-11 (2007), 1047--1062.

\bibitem{Eremenko2000} A. Eremenko. \textit{Bloch radius, normal families and quasiregular mappings}. Proc. Amer. Math. Soc., 128, No. 2 (2000), 557–-560.

\bibitem{Estermann1971} T. Estermann. \textit{Notes on Landau's proof of Picard's "Great" Theorem}. Sudies in Pure Mathematics presented to R. Rado, ed. L. Mirsky. Acad. Press London, New York (1971), 101--106.

\bibitem{GS1989} K. G\"{u}rlebeck and W. Spr\"{o}ssig. \textit{Quaternionic Analysis and Elliptic Boundary Value Problems}. Akademie Verlag, Berlin, 1989.

\bibitem{GS1997} K. G\"{u}rlebeck and W. Spr\"{o}ssig. \textit{Quaternionic Calculus for Engineers and Physicists}. John Wiley and Sons, Chichester, 1997.

\bibitem{GueMal1999} K. G\"{u}rlebeck and H. Malonek. \textit{A hypercomplex derivative of monogenic functions in $\mathbb{R}^{n+1}$ and its applications}. Complex Variables and Elliptic Equations, Vol. 39, No. 3 (1999), 199--228.

\bibitem{GueJoao2007} K. G\"{u}rlebeck and J. Morais. \textit{Bohr's Theorem for monogenic functions}. AIP Conference Proceedings 936 (2007), 750--753.

\bibitem{GueJoao2009} K. G\"{u}rlebeck and J. Morais. \textit{On Bohr's phenomenon in the context of Quaternionic analysis and related problems}. Proceedings of the 17-th ICFIDCA, Ho Chi Minh City (2009).

\bibitem{GueJoao2011} K. G\"{u}rlebeck and J. Morais. \textit{Bohr Type Theorems for Monogenic Power Series}. Computational Methods and Function Theory, Vol. 9, No. 2 (2011), 633--651.

\bibitem{GueJoao22011} K. G\"urlebeck and J. Morais. \textit{On orthonormal polynomial solutions of the Riesz system in $\mathbb{R}^3$}, Recent Advances in Computational and Applied Mathematics, (2011) 143--158, doi: $10.1007/978-90-481-9981-56$.

\bibitem{Heins1962} M. Heins. \textit{Selected Topics in the Classical Theory of Functions of a Complex Variable}. Holt, Rinehart and Winston, New York, 1962.

\bibitem{KS1996} V. Kravchenko and M. Shapiro. \textit{Integral Representations for Spatial Models of Mathematical Physics}. Research Notes in Mathematics, Pitman Advanced Publishing Program, London, 1996.

\bibitem{K2003} V. Kravchenko. \textit{Applied quaternionic analysis. Research and Exposition in Mathematics}. Lemgo: Heldermann Verlag, Vol. 28, 2003.

\bibitem{Landau1926} E. Landau. \textit{Der Picard-Schottkysche Satz und die Blochsche Konstante}. Sitz. Bel'. Preuss. Akad., Phys.-Math. (1926).

\bibitem{LandauValiron1929} E. Landau and G. Valiron. \textit{A Deduction from Schwarz's Lemma}. London Math. Soc., 4.3 (1929), 162--163.

\bibitem{MitelmanShapiro1995} I. Mitelman and M. Shapiro. \textit{Differentiation of the Martinelli-Boch\-ner integrals and the notion of hyperderivability}. Mathematische Nach\-richten 172, No. 1 (1995), 211--238.

\bibitem{MindaSchober1983} D. Minda and G. Schober. \textit{Another elementary approach to the theorems of Landau, Montel, Picard and Schottky}. Complex Variables Theory Appl. 2 (1983), 157--164.

\bibitem{JoaoThesis2009} J. Morais. \textit{Approximation by homogeneous polynomial solutions of the Riesz system in $\mathbb{R}^3$}, Ph.D. diss., Bauhaus-Universit\"at Weimar, 2009.

\bibitem{JoaoGue2011} J. Morais and K. G\"urlebeck. \textit{Real-Part Estimates for Solutions of the Riesz System in $\mathbb{R}^3$}, Complex Variables and Elliptic Equations, 
18 pp, doi: 10.1080/17476933.2010.504838.

\bibitem{Leutwiler2001} H. Leutwiler. \textit{Quaternionic analysis in $\mathbb {R}^{3}$ versus its hyperbolic modification}, Brackx, F., Chisholm, J.S.R. and Soucek, V. (ed.). NATO Science Series II. Mathematics, Physics and Chemistry, Vol. 25, Kluwer Academic Publishers, Dordrecht, Boston, London, (2001), 193--211.

\bibitem{Lohofer1998} G. Loh\"{o}fer. \textit{Inequalities for the Associated Legendre Functions}. Journal of Approximation Theory 95 (1998), 178--193.

\bibitem{Pick1916} G. Pick. \textit{\"Uber eine Eigenschaft der konformen Abbildung kreisf\"ormiger Bereiche}. Math. Ann. 77 (1916), 1–-6.

\bibitem{Pommerenke1964} Ch. Pommerenke. \textit{\"Uber die Faberschen Polynome schlichter Funktionen}. Math. Z., 85 (1964), 197--208.

\bibitem{Rajala2001} K. Rajala. \textit{A lower bound for the Bloch radius of K-quasiregular mappings}. Proc. Amer. Math. Soc., 132, No. 9 (2004), 2593-–2601.

\bibitem{Remmert1998} R. Remmert. \textit{Classical Topics in Complex Function Theory}. Berlin: Springer-Verlag, 1998.

\bibitem{Riesz1958} M. Riesz. \textit{Clifford numbers and spinors}. Inst. Phys. Sci. and Techn. Lect. Ser.: Vol. 38. Maryland, 1958.

\bibitem{Rochon2001} D. Rochon. \textit{A Bloch constant for hyperholomorphic functions}. Complex Variables, Theory Appl. 44, No.2, 85-101 (2001).

\bibitem{Sansone1959} G. Sansone. \textit{ Orthogonal Functions}. Pure and Applied Mathematics, Vol. IX. Interscience Publishers, New York, 1959.

\bibitem{Schwarz1890} H. A. Schwarz. \textit{Gesammelte Mathematische Abhandlungen}. II, Springer-Verlag, Berlin, 1890.

\bibitem{ShapiroVasilevski11995} M. Shapiro and N. L. Vasilevski. \textit{Quaternionic $\psi$-hyperholomorphic functions, singular operators and boundary value problems I}. Complex Variables, Theory Appl., 1995. 

\bibitem{ShapiroVasilevski21995} M. Shapiro and N. L. Vasilevski. \textit{Quaternionic $\psi$-hyperholomorphic functions, singular operators and boundary value problems II}. Complex Variables, Theory Appl., 1995.

\bibitem{Sud1979} A. Sudbery. \textit{Quaternionic analysis}, Math. Proc. Cambridge Phil. Soc. 85, (1979), 199--225.

\bibitem{Wiman1914} A. Wiman. \textit{\"{U}ber den Zusammenhang zwischen dem Maximalbetrage einer analytischen Funktion und dem gr\"{o}\ss ten Gliede der zugeh\"{o}rigen Taylorschen Reihe}, Acta Math. 37, 1914, pp. 305-326.

\bibitem{Wu1967} H. Wu. \textit{Normal families of holomorphic mappings}. Acta Math., 119, (1967), 193-233.

\end{thebibliography}
\end{document}